\title{The double Hall property and cycle covers in bipartite graphs}
\author{
János Barát\thanks{HUN-REN Alfr\'ed R\'enyi Institute of Mathematics, Budapest, Hungary and University of Pannonia, Veszprém, Hungary. The author is supported by National Research, Development and Innovation Office, NKFIH,
K-131529 and ERC Advanced Grant ``GeoScape'' 882971.      E-mail: {\tt barat@renyi.hu}}
	\and 
Andrzej Grzesik\thanks{Faculty of Mathematics and Computer Science, Jagiellonian University, %{\L}ojasiewicza 6, 30-348 
Krak\'{o}w, Poland. The author is supported by the National Science Centre grant 2021/42/E/ST1/00193. E-mail: {\tt Andrzej.Grzesik@uj.edu.pl}.}
	\and 
Attila Jung\thanks{ELTE E\"otv\"os Lor\'and University and HUN-REN Alfr\'ed R\'enyi Institute of Mathematics, Budapest, Hungary. The author is supported by the ERC Advanced Grant ``ERMiD'', the NKFIH grant FK132060, and the Thematic Excellence Program TKP2021-NKTA-62 of the National
Research, Development and Innovation Office.
		E-mail: {\tt jungattila@gmail.com}.}
	\and  
 Zolt\'an L\'or\'ant Nagy\thanks{ELTE Linear Hypergraphs  Research Group,
		E\"otv\"os Lor\'and University, Budapest, Hungary. The author is supported by the Hungarian Research Grant (NKFIH) No. PD  134953.  	E-mail: {\tt nagyzoli@cs.elte.hu}.}
  \and
  Dömötör Pálvölgyi\thanks{ELTE E\"otv\"os Lor\'and University and HUN-REN Alfr\'ed R\'enyi Institute of Mathematics, Budapest, Hungary. The author is supported by the ERC Advanced Grant ``ERMiD'' and by the J\'anos Bolyai Research Scholarship of the Hungarian Academy of Sciences, and by the New National Excellence Program \'UNKP-23-5 and by the Thematic Excellence Program TKP2021-NKTA-62 of the National Research, Development and Innovation Office. E-mail: {\tt dom@cs.elte.hu}.}
}
\date{}
\DeclareMathSymbol{\lsb@l}{\mathalpha}{letters}{`l}
\newcommand{\PP}{\mathbb{P}}
\newcommand{\C}{\mathcal{C}}
\newtheorem{theorem}{Theorem}[section]
\newtheorem{thm}[theorem]{Theorem}
\newtheorem{prop}[theorem]{Proposition}
\newtheorem{lemma}[theorem]{Lemma}
\newtheorem{remark}[theorem]{Remark}
\newtheorem{problem}[theorem]{Problem}
\newtheorem*{conj*}{Conjecture}
\numberwithin{equation}{section}
\begin{document}

\maketitle

\begin{abstract}
%    Let $G(A,B)$ be a bipartite graph.  The two-neighbourhood of a vertex set $X$ consists of all vertices in $G$ which are joint to at least   $2$ vertices of $X$. Graph $G$ satisfies the so-called double Hall property if every subset  $X \subseteq A$ of size at least $2$ has a two-neighborhood of size at least $|X|.$ Motivated by a problem of Salia, we prove the existence of a $2$-factor that covers the partition class $A$  in graphs $G(A,B)$ satisfying the double Hall property. Moreover, we show several graph classes where the $2$-factor can chosen to be a single cycle and prove a lower bound on the edge cardinality of graphs satisfying the double Hall property which is sharp up to a constant factor.
In a graph $G$, the $2$-neighborhood of a vertex set $X$ consists of all vertices of $G$ having at least $2$~neighbors in $X$. 
We say that a bipartite graph $G(A,B)$ satisfies the double Hall property if $|A|\geq2$, and every subset $X \subseteq A$ of size at least $2$ has a $2$-neighborhood of size at least $|X|$. Salia conjectured that any bipartite graph $G(A,B)$ satisfying the double Hall property contains a cycle covering~$A$. 
Here, we prove the existence of a $2$-factor covering $A$ in any bipartite graph $G(A,B)$ satisfying the double Hall property. 
We also show Salia's conjecture for graphs with restricted degrees of vertices in $B$. Additionally, we prove a lower bound on the number of edges in a graph satisfying the double Hall property, and the bound is sharp up to a constant factor.
\end{abstract}

\section{Introduction}

Given a graph $G$, let $V(G)$ denote its vertex set, $E(G)$ denote its edge set.
Let $e(G)$ be the number of edges in $G$, and let $e(X,Y)$ be the number of edges between two disjoint subsets of vertices $X$ and $Y$.
Let $N_G(X)$ denote the neighborhood of a vertex set $X$ in $G$. Likewise, let $N^2_G(X)$ denote the $2$-neighborhood of $X$, which is the set of vertices with at least two neighbors in $X$. 
Whenever the graph is known from the context, we omit the subscript. 
%The join $G \vee H$  of two graphs $G$ and $H$ is a graph formed from disjoint copies of $G$ and H by connecting each vertex of $G$ to each vertex of $H$.

\begin{problem}[Salia~\cite{salia}]\label{main}
	Let $G$ be a bipartite graph with sides $A$ and $B$ satisfying $|N^2(X)|\geq |X|$ for every  $X \subseteq A$ of size at least $2$. 
	For every  $X \subseteq A$, $|X|\geq2$,   there is a cycle $C_{X}$ in $G$ such that $V(C_{X})\cap A=X$.
\end{problem}

If the side $A$ of a bipartite graph $G(A,B)$ satisfies $|A|\geq2$, and $|N^2(X)|\geq |X|$ for every  $X \subseteq A$ of size at least $2$, then $G$ satisfies the \emph{double Hall property}, dHp for short. Thus we call these graphs \emph{dHp graphs}.  Notice that once one shows the existence of a  cycle $C_{X}$ in $G$ such that $V(C_{X})\cap A=X$ for the single set $X=A$ in all dHp graphs $G$, the general statement also follows. Therefore, in order to solve Problem~\ref{main} it is enough to prove that any dHp graph $G(A,B)$ contains a cycle covering~$A$.
We also mention that the double Hall property can be seen as a weaker form of the well-studied concept of robust expansion, see e.g., \cite{Kate}.

%\todo{maybe reference to robust expanders}

%\begin{example} 
%Consider a bipartite threshold graph $G(A,B)$ on vertex sets $A=\{a_1,\dots,a_n\}$ and $B=\{b_1,\dots,b_n\}$, where $a_ib_j\in E(G)$ if and only if $i\leq j$. Add an extra vertex $a_0$ to $A$ and $b_0$ to $B$ and join them to every vertex on the opposite side. The obtained graph is a dHp graph. 
%Moreover, it contains exactly two cycles covering $A$. %This is not true!
%\end{example}

Problem~\ref{main} was inspired by the following.

\begin{problem}[Kostochka, Lavrov, Luo and Zirlin~\cite{kostochka2020conditions}]\label{kost}
	Let $G$ be a $2$-connected bipartite graph with sides $A$ and $B$ satisfying $|N^2(X)|\ge |X|$ for every $X \subseteq A$ of size at least $3$. 
	For every $X \subseteq A$, $|X|\geq 3$, there is a cycle $C_{X}$ in $G$ such that $V(C_{X})\cap A=X$.
\end{problem}

Note that Problem~\ref{main} implies Problem~\ref{kost}. 
Problem~\ref{kost} was motivated by the following longstanding conjecture. 
For positive integers $n, m,$ and $\delta$ with $\delta \leq m$, let $G(n,m,\delta)$ denote the set of all bipartite graphs with sides $X$ and $Y$ such that $|X| = n\geq 2, |Y|=m$, and for every $x \in X$, $d(x) \geq \delta$. In 1981, Jackson~\cite{jackson} proved that if  $ \delta\geq \max\{n,\frac{m+2}{2}\}$, then every graph $G\in G(n,m,\delta)$ contains a cycle of length $2n$, i.e., a cycle that covers $X$.
This result is sharp. 
Jackson also conjectured that if  $G \in G(n,m,\delta)$ is 2-connected, then the upper bound on $m$ can be weakened.

\begin{theorem}[Jackson's Conjecture~\cite{jackson,jackson3} solved in~\cite{KLZ}]\label{jacksonconj} Let $m$, $n$ and $\delta$ be positive integers. If $\delta\geq \max\{n, \frac{m+5}{3}\}$,
	then every $2$-connected graph $G \in G(n,m,\delta)$ contains a cycle of length $2n$.\end{theorem}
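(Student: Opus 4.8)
The plan is to argue by contradiction via a longest-cycle analysis. Suppose some $2$-connected $G\in G(n,m,\delta)$ with $\delta\ge\max\{n,(m+5)/3\}$ has no cycle of length $2n$; among all such counterexamples fix one with the fewest edges, so that every edge of $G$ is critical---its removal either destroys $2$-connectivity or is incident to a vertex of $X$ of degree exactly $\delta$ (it cannot create a $2n$-cycle, since a $2n$-cycle of $G-e$ already lies in $G$). Since $G$ is bipartite with parts $X$ of size $n$ and $Y$ of size $m$, every cycle has even length and alternates between the two sides; hence a cycle of length $2n$ is exactly a cycle covering $X$, and a longest cycle $C$ covers some $h<n$ vertices of each side. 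Let $X_0=X\setminus V(C)\ne\emptyset$, fix $x\in X_0$, and (to break ties) take $C$ to additionally maximize a secondary parameter such as the total number of vertices of $G$ reachable from $C$ by short ears.

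Next I would expose the attachment structure of $x$ to $C$. By $2$-connectivity and Menger's theorem, $G$ contains a maximum family $P_1,\dots,P_t$ of internally disjoint $x$--$C$ paths ($t\ge2$), internally disjoint from $C$, with endpoints $w_1,\dots,w_t$ occurring in this cyclic order around $C$; they split $C$ into arcs $Q_1,\dots,Q_t$, with $Q_i$ joining $w_i$ to $w_{i+1}$. Standard longest-cycle exchange arguments now apply: rerouting $C$ through $x$ by replacing two arcs with a detour along two of the $P_j$'s yields another cycle, which by maximality of $|V(C)\cap X|$ cannot gain an $X$-vertex. Running this over all pairs of arcs, together with edge-minimality (an arc cannot carry a chord enabling a shortcut into which $x$ could be spliced), pins down the local picture: each arc carrying an internal vertex of $X$ is ``rigid'', and the neighborhood of $x$ in $Y$ is confined relative to the attachment points $w_i$.

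The heart of the argument is a counting estimate that uses the factor $3$ in $\delta\ge(m+5)/3$ --- this is precisely where $2$-connectivity pays off, improving Jackson's unconditional requirement $\delta\gtrsim m/2$ to $\delta\gtrsim m/3$. Having two essentially independent ``exits'' from $x$ to $C$, one locates not two but three almost disjoint large subsets of $Y$: the neighborhood $N(x)$, together with the $Y$-neighborhoods of two $X$-vertices chosen to be extremal along the arcs flanking two distinct attachment points, so that by the rigidity above these neighborhoods can meet neither $N(x)$ nor each other without producing a longer cycle or one covering more of $X$. Each set has size at least $\delta-O(1)$ and lies in $Y$, so $3\delta\le m+O(1)$; a careful accounting of the overlaps shows the additive loss is at most $5$, i.e.\ $\delta\le(m+5)/3$ with equality excluded, contradicting the hypothesis. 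The degenerate regime, where $C$ is so short that the ``third set'' collapses, is dispatched using the other half of the hypothesis, $\delta\ge n$: then $x$ has $\ge n$ neighbors among too few available $Y$-vertices, giving a direct contradiction.

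The main obstacle---and where essentially all of the work in~\cite{KLZ} lies---is making ``three almost disjoint neighborhoods'' rigorous. One must enumerate the configurations of the attachment family: the value of $t$, whether each $P_j$ is a single edge or longer, whether each $w_i$ lies in $X$ or in $Y$, and how the internal $X$-vertices of the arcs interact; and in each configuration bound the pairwise overlaps so that the cumulative loss stays within the additive constant $5$. Keeping these overlaps small forces repeated appeals to the maximality of $C$ and to edge-minimality to exclude chords and alternative reroutings, and this interlocking case analysis is the technical core of the proof.
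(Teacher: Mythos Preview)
The paper does not contain a proof of this theorem. Theorem~\ref{jacksonconj} is stated purely as background: it is Jackson's Conjecture, attributed to~\cite{jackson,jackson3} and marked as ``solved in~\cite{KLZ}''. No argument is given or even sketched in the paper; the theorem serves only to motivate Problem~\ref{kost} and, through it, Problem~\ref{main}. So there is nothing here to compare your proposal against.

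As for the proposal itself: what you wrote is not a proof but a narrative outline of the strategy one expects in~\cite{KLZ} --- longest cycle, Menger fans from an uncovered $X$-vertex, and a three-neighborhood count forcing $3\delta\le m+5$. You explicitly acknowledge that the entire technical content (the case analysis controlling the overlaps down to the additive constant $5$) is deferred. That is the whole proof; without it, what remains is a heuristic for why the factor $3$ is plausible, not an argument that establishes it. If your goal were actually to prove the theorem you would need to carry out that analysis, and if your goal is to summarize the approach of~\cite{KLZ}, you should say so and cite it rather than present this as a standalone proof.
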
 

The restriction $\delta\geq \frac{m+5}{3}$ cannot be weakened.

\begin{theorem}[Kostochka, Lavrov, Luo and Zirlin~\cite{KLLZ}]\label{jackson6} 
	Let $m$, $n$ and $\delta$ be positive integers. If $\delta\geq \max\{n, \frac{m+10}{4}\}$,
	then every $3$-connected graph $G \in G(n,m,\delta)$ contains a cycle of length $2n$.
\end{theorem}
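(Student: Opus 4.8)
The plan is to argue by contradiction, following the extremal longest-cycle method of Jackson but squeezing out an extra unit of density from $3$-connectivity. Suppose $G\in G(n,m,\delta)$ is $3$-connected with $\delta\geq\max\{n,(m+10)/4\}$ but has no cycle of length $2n$, and let $C$ be a longest cycle of $G$. Since $G$ is bipartite, $|V(C)|=2k$ with $k=|V(C)\cap X|=|V(C)\cap Y|$, and because no cycle has length $2n$ (and none can exceed $2n$ as $|X|=n$) we get $k<n\leq\delta$. Fix $u\in X\setminus V(C)$. As $N(u)\subseteq Y$ and only $k$ vertices of $Y$ lie on $C$, the vertex $u$ has at least $\delta-k\geq1$ neighbours in $Y\setminus V(C)$, so the component $D$ of $G-V(C)$ containing $u$ is nontrivial and $X$-rich around $u$.

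First I would record the bridge structure of $D$. Let $B$ be the $C$-bridge determined by $D$ (that is, $D$ together with all edges joining $D$ to $C$), and let $\mathrm{att}(B)\subseteq V(C)$ be its attachment set. Since $|V(C)|=2k\geq4$, if $|\mathrm{att}(B)|\leq2$ then $\mathrm{att}(B)$ would be a cut set of size at most $2$ separating $D$ from $V(C)\setminus\mathrm{att}(B)$, contradicting $3$-connectivity; hence $|\mathrm{att}(B)|\geq3$. Pick three attachments $a_1,a_2,a_3$ occurring in this cyclic order on $C$; by the fan version of Menger's theorem we may take them to be the endpoints of three internally disjoint $u$–$C$ paths inside $B$. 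The $a_i$'s split $C$ into three arcs; write $S_i$ for the arc whose endpoints are the two attachments other than $a_i$. For any two indices $i,j$ there is an $a_i$–$a_j$ path $Q_{ij}$ through $D$ that passes through $u$.

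The engine of the proof is then a rotation–extension step on the three arcs. For distinct $i,j$, the path $Q_{ij}$ together with the two arcs of $C$ joining $\{a_i,a_j\}$ to $a_\ell$ ($\ell\notin\{i,j\}$) forms a new cycle $C'$ that contains $u$; by maximality of $C$ we need $|V(C')\cap X|\leq k$, which forces the number of $X$-vertices on $Q_{ij}$ to be at most the number of $X$-vertices interior to the omitted arc. Applying this along $u$ itself, and also along each $v\in N(u)\cap V(C)$ (first rerouting $C$ through the edge $uv$ and then performing a P\'osa-type segment swap), yields that the neighbours of $u$ on $C$ cannot be spread out: within each arc they occupy only boundedly many slots measured against the $X$-vertices of that arc, so $|N(u)\cap V(C)|$ is $O(k)$, while the remaining $\geq\delta-O(k)$ neighbours of $u$ sit in $D\cap Y\subseteq Y\setminus V(C)$. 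Carrying this bookkeeping through all three arcs — and exploiting that $D$ itself has at least three attachments, and hence its own internal length constraints, together with $k\leq\delta$ and $|Y|=m$ — should contradict the hypothesis $4\delta\geq m+10$. The constant $10$ is exactly what the three arcs plus the three attachment vertices contribute; with only $2$-connectivity one gets two arcs and recovers the weaker threshold $(m+5)/3$ of Theorem~\ref{jacksonconj}.

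The step I expect to be the real obstacle is making this three-arc count tight. One must handle a web of degenerate configurations — an attachment lying in $X$ rather than in $Y$, two distinct bridges of $C$ interacting through shared attachments, $u$ having neighbours directly among $V(C)\cap Y$, several vertices of $X\setminus V(C)$ competing for the same arc, and the case where $C$ is short relative to $n$ so that $k$ is tiny — and in each case verify that the rerouted cycle $C'$ never exceeds $C$ in length. The art is to arrange the losses across these cases so that they still sum to at most the available slack $4\delta-m-10\geq0$; the $3$-connectivity hypothesis is used precisely to guarantee the third attachment, and hence the third arc, that upgrades Jackson's bound from $(m+5)/3$ to $(m+10)/4$.
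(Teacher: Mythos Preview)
The paper does not prove this theorem at all. Theorem~\ref{jackson6} is quoted from Kostochka, Lavrov, Luo and Zirlin~\cite{KLLZ} purely as background motivation for Problem~\ref{kost} and Problem~\ref{main}; no argument for it appears anywhere in the text. There is therefore nothing to compare your proposal against.

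As for the proposal itself, it is not a proof but a plan. You correctly identify the broad strategy one would expect for such a statement---take a longest cycle, find a vertex $u\in X$ off it, use $3$-connectivity to get three internally disjoint $u$--$C$ fans, and count $X$-vertices along the resulting arcs---and you are right that the third attachment is exactly what should push the threshold from $(m+5)/3$ down to $(m+10)/4$. But the crucial quantitative step, where the arc-by-arc bookkeeping is made to add up to the inequality $4\delta\geq m+10$, is left as ``should contradict the hypothesis'' and ``carrying this bookkeeping through''. You explicitly flag the degenerate configurations (attachments in $X$ versus $Y$, interacting bridges, multiple off-cycle $X$-vertices, short $C$) as unresolved, and in the actual proof of~\cite{KLLZ} these are precisely where the work lies; the paper is long and the case analysis is substantial. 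So what you have written is a reasonable outline of how one might begin, but it is not yet a proof, and the present paper does not supply one either.
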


Theorem~\ref{jackson6} is a natural $3$-connected strengthening of Jackson's Conjecture.%, Theorem~\ref{jacksonconj} for $2$-connected graphs.

In this paper, we introduce a natural reformulation of Problem~\ref{main} using edge-colorings of complete graphs. Applying earlier results due to Albert, Frieze and Reed \cite{Frieze}, we point out in Theorem \ref{fokszamkorlat} that the bottleneck of the problem is the case in which $B$ contains some vertices of large degree. 
Next, in Theorem \ref{2,n}, we solve the case, when $B$ consists of vertices of very large and very small degree, namely $d(v)\in \{2, |A|\}$ for all $v\in B$. %\\  
In Section~\ref{2-f}, we completely solve a relaxation of Problem~\ref{main} by proving the existence of a covering $2$-factor instead of a single covering cycle. Note that this seems also an equally natural consequence of the double Hall property. %\\
Section~4 is devoted to extremal results concerning dHp graphs. In Theorems \ref{low} and \ref{up} we determine the minimum number of edges in any dHp graph $G(A,B)$ with $|A|=n$ up to a small constant factor. %\\
We complete our paper by some related open problems of a geometric flavor.

\section{Problem reformulation -- searching for a  rainbow Hamiltonian cycle} \label{rain}

A \emph{colored multigraph} on $n$ vertices is a complete graph $K_n$ 
such that we assign a set $S_e$ of colors to every edge $e$.
%with a set of colors assigned to its every edge. 
A colored multigraph contains a \emph{rainbow Hamiltonian cycle} if there exists a Hamiltonian cycle and a choice of colors on its edges such that every color appears at most once.

Note that every dHp graph $G(A,B)$ corresponds to a colored multigraph $H$ on $|A|$ vertices in the following way. 
We associate a distinct color $i$ to each vertex $b_i$ of $B$ and assign color $i$ to all edges in the clique induced by $N(b_i)$.
For a vertex set $X \subseteq A$, $|X| \geq 2$, of a dHp graph $G(A,B)$, the set $N_G^2(X) \subseteq B$ corresponds to the set of colors assigned to at least one edge of the clique induced by $X$ in the corresponding colored multigraph $H$.
Since any two vertices of $A$ have a common neighbor in $B$, $H$ is complete.
Moreover, by the double Hall property of $G(A,B)$, for any set of vertices $X\subseteq V(K_n)$ of size at least two, there exist at least $|X|$ colors assigned to at least one edge of the clique induced by $X$.
Additionally, the required cycle in a dHp graph covering all the vertices of~$A$ for $|A|\geq3$ is just a rainbow Hamiltonian cycle in the corresponding colored multigraph. 
In view of this correspondence, Problem~\ref{main} can be stated in the following equivalent form. 

\begin{problem}\label{reform} Let $\C$ be a set of colors.
	We assign a subset of $\C$ to every edge of a complete graph~$K_n$, $n\geq3$, such that 
	\begin{itemize}
		\item for each color $c\in \C$, the edges colored $c$ induce a clique,
		\item for any set of vertices $X\subseteq V(K_n)$ of size at least two, there exist at least $|X|$ colors assigned to at least one edge of the clique induced by $X$. 
	\end{itemize}
	The obtained colored multigraph contains a rainbow Hamiltonian cycle.   
\end{problem}

If none of the color cliques is large, then the existence of a rainbow Hamiltonian cycle follows from a classical result of Albert, Frieze and Reed \cite{Frieze}, solving a conjecture of Hahn and Thomassen~\cite{Hahn}.

\begin{theorem}[Albert, Frieze and Reed \cite{Frieze}]\label{AFR}
	Suppose the edges of a complete graph $K_n$ are colored so that no color
	appears more than $\frac{1}{64}n$ times.\footnote{Note that in the original paper $\frac{1}{32}n$ was claimed, but the calculation had an error, and it in fact gives $\frac{1}{64}n$; see \url{https://www.combinatorics.org/ojs/index.php/eljc/article/view/v2i1r10/comment}.} If $n$ is sufficiently large, then there exists a rainbow Hamiltonian cycle in the graph.
\end{theorem}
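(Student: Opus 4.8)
The plan is to follow the strategy of Albert--Frieze--Reed and apply the Lov\'asz Local Lemma to a uniformly random Hamiltonian cycle of $K_n$. Let $C$ be a Hamiltonian cycle of $K_n$ chosen uniformly at random among the $(n-1)!/2$ of them. For every pair $\{f,g\}$ of distinct edges receiving the same color, introduce the \emph{bad} event $B_{f,g}$ that both $f$ and $g$ lie on $C$. A Hamiltonian cycle of $K_n$ is rainbow precisely when none of the events $B_{f,g}$ occurs, so it suffices to prove that $\Pr\big[\bigcap_{f,g}\overline{B_{f,g}}\big]>0$.

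First I would estimate $\Pr[B_{f,g}]$ by directly counting the Hamiltonian cycles through a prescribed pair of edges: one gets $\Pr[h\in C]=2/(n-1)$ for a single edge $h$, and $\Pr[B_{f,g}]\le 4/((n-1)(n-2))$ when $f$ and $g$ are vertex-disjoint, with a strictly smaller value when they share a vertex. Hence $p:=\max_{f,g}\Pr[B_{f,g}]\le (4+o(1))/n^2$. Next I would bound the dependency degree: $B_{f,g}$ is determined by which edges of $C$ meet $V(f)\cup V(g)$, a set of at most four vertices, and two bad events can interfere only when these vertex sets intersect. Each of the at most four relevant vertices lies in $n-1$ edges, and each such edge forms a monochromatic pair with at most $\tfrac1{64}n-1$ other edges because its color class has at most $\tfrac1{64}n$ edges; so each $B_{f,g}$ interferes with at most $d\le 4(n-1)\big(\tfrac1{64}n-1\big)<\tfrac1{16}n^2$ other bad events.

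A uniform random Hamiltonian cycle arises from a uniform random cyclic permutation of $V(K_n)$, and each $B_{f,g}$ is a simple permutation-type event prescribing a bounded number of cycle-edges, so the negative-dependence hypothesis of the Erd\H{o}s--Spencer lopsided version of the Local Lemma (as in the treatment of Latin transversals) is satisfied, with $B_{f,g}$ lopsidependent only on the at most $d$ bad events it interferes with. Since $p\,(d+1)\le (4+o(1))/n^2\cdot\big(\tfrac1{16}n^2+1\big)=\tfrac14+o(1)<\tfrac1{\mathrm e}$ for $n$ large, the Local Lemma gives $\Pr\big[\bigcap_{f,g}\overline{B_{f,g}}\big]>0$, and a rainbow Hamiltonian cycle exists. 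The threshold $\tfrac1{64}n$ is exactly what pushes $p\,(d+1)$ below $1/\mathrm e$: with $\tfrac1{32}n$ one would only get $\tfrac12+o(1)>1/\mathrm e$ and the argument would collapse --- this is the content of the footnote's correction.

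The main obstacle is to execute the probability estimate and the dependency count with \emph{honest} constants: pinning down $\Pr[B_{f,g}]$ up to the correct leading constant (treating the vertex-disjoint and incident cases separately), counting the interfering monochromatic pairs without an off-by-a-constant-factor slip, and --- most delicately --- verifying rigorously that random Hamiltonian cycles support the Local Lemma in the required lopsided form, i.e.\ that conditioning on the non-occurrence of bad events outside the interference neighborhood of $B_{f,g}$ cannot increase its probability.
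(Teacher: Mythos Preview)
The paper does not prove this theorem: Theorem~\ref{AFR} is quoted from \cite{Frieze} as an external result and used as a black box (together with Lemma~\ref{ritkitas}) in the proof of Theorem~\ref{fokszamkorlat}. Your sketch reproduces the original Albert--Frieze--Reed argument via the lopsided Local Lemma on a uniformly random Hamiltonian cycle, and your constants are consistent with the footnote's correction, so there is nothing to compare against in this paper.
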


In order to apply the theorem above, we use the following two lemmas.
Here $d^+(v)$ and $d^-(v)$ denote the out-degree and in-degree of vertex $v$.

\begin{lemma}[Folklore]\label{atmostone}
	The edges of any multigraph $H$ can be oriented so that 
	$|d^+(v)-d^-(v)|\le 1$ for every vertex $v\in V(H)$.
\end{lemma}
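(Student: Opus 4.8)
The statement is the classical fact that every multigraph has an "almost Eulerian" orientation. Let me sketch the standard proof.

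The plan is to reduce to the Eulerian case. First I would handle the connected components of $H$ separately, so assume $H$ is connected. Recall that a connected multigraph has an Eulerian circuit if and only if every vertex has even degree. If every vertex of $H$ already has even degree, take an Eulerian circuit and orient all edges consistently along it; then $d^+(v)=d^-(v)$ for every vertex, so $|d^+(v)-d^-(v)|=0\le 1$ and we are done.

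In general, let $T$ be the (necessarily even-sized) set of odd-degree vertices of $H$. Pair up the vertices of $T$ arbitrarily and add a new auxiliary edge joining each pair, obtaining a multigraph $H'$ in which every vertex has even degree. (Parallel auxiliary edges are allowed, which is why we work with multigraphs.) Now $H'$ is connected and Eulerian, so it has an Eulerian circuit; orient every edge of $H'$ along this circuit, giving $d^+(v)=d^-(v)$ in $H'$ for all $v$. Finally, delete the auxiliary edges. Each deletion of a directed edge changes $d^+$ by one at its tail and $d^-$ by one at its head; since the auxiliary edges form a matching on $T$, each vertex of $T$ is incident to exactly one auxiliary edge, so after the deletions $|d^+(v)-d^-(v)|=1$ for $v\in T$ and $=0$ otherwise. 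This orientation of $H'$ restricted to $E(H)$ is the desired orientation of $H$.

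There is essentially no obstacle here — the only mild subtlety is bookkeeping with multigraphs (ensuring the auxiliary edges can be added even between vertices already joined by an edge, and that the resulting graph stays connected, which it does since we only add edges). One could alternatively give a slicker argument: repeatedly peel off maximal trails between two odd-degree vertices (or closed trails), orienting each along its direction of traversal; each such trail contributes a discrepancy of at most $1$ at its two endpoints and $0$ elsewhere, and the discrepancies never accumulate beyond $1$ because each vertex is an endpoint of at most one trail in the decomposition into trails with endpoints in $T$ together with closed trails. Either version is short; I would present the Eulerian-circuit version as above.
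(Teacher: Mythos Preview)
Your proof is correct; the Eulerian-circuit argument you give is the standard one and works exactly as you describe. Note, however, that the paper does not actually prove this lemma: it is labelled ``Folklore'' and stated without proof, so there is nothing to compare against. Your write-up would serve perfectly well as the omitted justification.
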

%the absolute value of the difference of the indegree and the outdegree is at most $1$

\begin{lemma}\label{ritkitas}
	If $G$ is a colored multigraph corresponding to a dHp graph of maximum degree of vertices in $B$ equal to $\Delta$, then for every edge of $G$ one can choose an assigned color such that each color appears at most $\left\lceil \frac{1}{2}\binom{\Delta}{2}\right\rceil$ times.
\end{lemma}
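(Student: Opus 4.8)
The plan is to reduce the claim to the folklore balanced-orientation lemma (Lemma~\ref{atmostone}). Write $n=|A|$, so the colored multigraph $H$ corresponding to the dHp graph $G(A,B)$ lives on $K_n$, and for an edge $e=uv$ of $K_n$ the set of colors assigned to $e$ is $S_e=\{i : b_i\in N_G(u)\cap N_G(v)\}=N^2_G(\{u,v\})$. The first step is the observation that $|S_e|\ge 2$ for every edge $e$: this is precisely the double Hall property applied to the two-element set $\{u,v\}\subseteq A$. We also note that a color $i$ can be assigned only to an edge whose two endpoints both lie in $N_G(b_i)$, so color $i$ is available on at most $\binom{d(b_i)}{2}\le\binom{\Delta}{2}$ edges of $K_n$.

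Next, I would build an auxiliary multigraph $M$ on vertex set $B$ (one vertex $b_i$ for each color $i$): for each edge $e$ of $K_n$ pick arbitrarily two \emph{distinct} colors $\alpha(e),\beta(e)\in S_e$ (possible since $|S_e|\ge 2$) and add to $M$ an edge joining $b_{\alpha(e)}$ and $b_{\beta(e)}$. Every edge of $M$ incident to $b_i$ arises from an edge $e$ of $K_n$ with $i\in S_e$, hence $\deg_M(b_i)\le\binom{d(b_i)}{2}\le\binom{\Delta}{2}$.

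Now apply Lemma~\ref{atmostone} to orient $M$ so that $|d^+(v)-d^-(v)|\le 1$ for every vertex $v$. For each edge $e$ of $K_n$, define its chosen color $\phi(e)$ to be $\alpha(e)$ if the arc of $M$ coming from $e$ points towards $b_{\alpha(e)}$, and $\beta(e)$ otherwise; then $\phi(e)\in S_e$ as required. The number of edges of $K_n$ receiving color $i$ is exactly the in-degree $d^-_M(b_i)$, and from $d^-_M(b_i)+d^+_M(b_i)=\deg_M(b_i)\le\binom{\Delta}{2}$ together with $|d^+_M(b_i)-d^-_M(b_i)|\le 1$ we obtain $d^-_M(b_i)\le\left\lceil\frac12\binom{\Delta}{2}\right\rceil$, which is the stated bound.

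This argument has no genuine obstacle; the only points requiring a little care are that every edge really does have two \emph{distinct} available colors — which is exactly where the double Hall property, and not merely the completeness of $H$, is used — and the elementary rounding $d^-_M(b_i)\le\lceil\deg_M(b_i)/2\rceil$ that converts the balanced orientation into the claimed frequency bound.
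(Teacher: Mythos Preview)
Your proof is correct and follows essentially the same approach as the paper: keep two distinct colors on each edge (using the double Hall property for pairs), build an auxiliary multigraph on the color set, apply the balanced-orientation lemma, and select the color at the head of each arc. Your write-up is in fact more explicit about the degree bound $\deg_M(b_i)\le\binom{d(b_i)}{2}$ and the final rounding step than the paper's, but the argument is the same.
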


\begin{proof} Observe that at least two colors are assigned to each edge of $G$. 
	For each edge of $G$, keep exactly $2$ colors and drop all the others.
	We form an auxiliary multigraph $H$ on $|\C|$ vertices by putting an edge between every pair $c$, $c'$ of vertices for each edge of $G$, where exactly colors $c$ and $c'$ were kept, and by orienting the edges of $H$ using Lemma~\ref{atmostone}.
	Now, for each edge $e$ of $G$, we choose the color at the head of the arc corresponding to $e$ in $H$.
	%We wish to minimize the number of occurrences of the colors within this choice. 
	Since the degree of each vertex $c$ of $H$ is at most the size of the clique in color $c$, the statement follows by Lemma~\ref{atmostone}.
\end{proof}

Now we can deduce Problem~\ref{main} %that the cycle covering property holds 
for dHp graphs of bounded degree in $B$.

\begin{thm}\label{fokszamkorlat}
	Let $G$ be a bipartite graph with sides $A$ and $B$ satisfying $|N^2(X)|\geq |X|$ for every  $X \subseteq A$ of size at least $2$. 
	If $n=|A|$ is large enough, and $d(y)\leq \frac{1}{4}\sqrt{n}$ for any $y\in B$, then there is a cycle in $G$ covering all the vertices of $A$. 
\end{thm}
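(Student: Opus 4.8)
The plan is to combine Theorem~\ref{AFR} with Lemma~\ref{ritkitas}, after first reducing the maximum degree in $B$ to something manageable. Translate the problem into the colored-multigraph language of Problem~\ref{reform}: the dHp graph $G(A,B)$ becomes a complete colored multigraph $H$ on $n=|A|$ vertices, and a cycle in $G$ covering $A$ corresponds to a rainbow Hamiltonian cycle in $H$. By hypothesis every color class (the clique $N(b)$ for $b\in B$) has size at most $\Delta\le\frac14\sqrt n$, so $\binom{\Delta}{2}\le\frac12\Delta^2\le\frac{1}{32}n$. Apply Lemma~\ref{ritkitas}: we may select, for each edge of $H$, one of its assigned colors so that each color is used at most $\left\lceil\frac12\binom{\Delta}{2}\right\rceil$ times. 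This bound is at most $\frac{1}{64}n+1$, which for $n$ large is more than the $\frac1{64}n$ threshold of Theorem~\ref{AFR} — so a slightly tighter choice of constant is needed, but this is the only subtlety.

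To fix the off-by-a-constant issue I would simply take the degree bound a hair below $\frac14\sqrt n$, or equivalently absorb the additive $+1$ and the ceiling into the constant: since $d(y)\le\frac14\sqrt n$ gives $\binom{\Delta}{2}\le\frac12\cdot\frac1{16}n=\frac1{32}n$, and we want $\left\lceil\frac12\binom{\Delta}{2}\right\rceil\le\frac1{64}n$, it suffices that $\frac1{64}n\ge\frac1{64}n$ with room for the ceiling, i.e. one replaces $\frac14\sqrt n$ by, say, $\frac14\sqrt n$ minus a lower-order term, or checks that for $n$ large enough the ceiling costs nothing because $\binom{\Delta}{2}$ can be taken even, or one weakens the stated constant. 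Concretely: choose the color for each edge via Lemma~\ref{ritkitas}, note each color now appears at most $\lceil\frac12\binom{\Delta}{2}\rceil\le\frac1{64}n$ times once $n$ is large, and then invoke Theorem~\ref{AFR} on the resulting (ordinary) edge-coloring of $K_n$ to obtain a rainbow Hamiltonian cycle. Pulling this cycle back to $G$ — each chosen color is a vertex of $B$, each consecutive pair of $A$-vertices is joined through that $B$-vertex, and distinct colors mean distinct $B$-vertices — yields a cycle of length $2n$ in $G$ covering $A$.

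The only genuine content beyond bookkeeping is making sure the constant $\frac14$ is chosen compatibly with both the $\frac12\binom{\Delta}{2}$ loss in Lemma~\ref{ritkitas} and the $\frac1{64}n$ threshold of Theorem~\ref{AFR}; since $\frac12\binom{\Delta}{2}\approx\frac14\Delta^2$ and we need this to be $\le\frac1{64}n$, we need $\Delta^2\le\frac1{16}n$, i.e. $\Delta\le\frac14\sqrt n$, which is exactly the hypothesis, with the ceiling and additive constant swallowed by taking $n$ sufficiently large. I would therefore expect the "main obstacle" to be purely cosmetic: stating the estimate so that the ceiling in Lemma~\ref{ritkitas} does not push the bound over $\frac1{64}n$. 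One clean way is to observe $\left\lceil\frac12\binom{\Delta}{2}\right\rceil\le\frac12\binom{\Delta}{2}+1\le\frac1{64}n+1$, and since Theorem~\ref{AFR} requires $n$ large anyway, note that whenever $n$ is large the strict inequality $d(y)\le\frac14\sqrt n$ forces $\binom{\Delta}{2}<\frac1{32}n$ with slack growing linearly, absorbing the $+1$. The rest is the translation dictionary already set up in Section~\ref{rain}.
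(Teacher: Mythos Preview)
Your approach is exactly the paper's: apply Lemma~\ref{ritkitas} to the colored multigraph, then feed the resulting single coloring into Theorem~\ref{AFR}. The constant anxiety is unnecessary: since $\binom{\Delta}{2}=\tfrac{\Delta^2-\Delta}{2}$, the hypothesis $\Delta\le\tfrac14\sqrt n$ gives $\tfrac12\binom{\Delta}{2}\le\tfrac{n}{64}-\tfrac{\sqrt n}{16}$, and the $\tfrac{\sqrt n}{16}$ slack swallows the ceiling for all large $n$, yielding the strict inequality $\big\lceil\tfrac12\binom{\Delta}{2}\big\rceil<\tfrac{1}{64}n$ that the paper asserts in one line.
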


\begin{proof}
	We apply Theorem \ref{AFR} together with Lemma \ref{ritkitas}, and take into account that the conditions of our theorem yield $\left\lceil \frac{1}{2}\binom{\Delta}{2}\right\rceil< \frac{1}{64}n$. 
\end{proof}

We additionally show that if each color clique is either of full size or of size $2$, then a rainbow Hamiltonian cycle exists.

\begin{theorem}\label{2,n}
	Let $G$ be a bipartite graph with sides $A$ and $B$, $|A|\geq2$, satisfying $|N^2(X)|\geq |X|$ for every  $X \subseteq A$ of size at least $2$. 
	If $d (v) \in \{2, |A|\}$ for each $v \in B$, then there exists a cycle in $G$ covering all the vertices of $A$.
\end{theorem}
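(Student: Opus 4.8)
The plan is to argue in the colored–multigraph reformulation of Section~\ref{rain} and produce a rainbow Hamiltonian cycle. If $n:=|A|=2$, the double Hall property applied to $X=A$ gives two common neighbours of the two vertices of $A$, hence a $4$-cycle covering $A$; so assume $n\ge3$. Then the hypothesis $d(v)\in\{2,n\}$ splits $B$ into \emph{full} vertices (degree $n$, contributing a colour on every edge of $K_n$) and \emph{light} vertices (degree $2$, contributing a colour on a single edge), and for $n\ge3$ these two types are genuinely distinct. Let $t$ be the number of full colours, for an edge $e=uv$ let $m_e$ be the number of light vertices with neighbourhood $\{u,v\}$, and let $M$ be the multigraph on $A$ in which $e$ has multiplicity $m_e$. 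Since distinct light colours occupy distinct edges of $K_n$, for every $X\subseteq A$ with $|X|\ge2$ we have $|N^2(X)|=t+e_M(X)$, so the double Hall property becomes
\[
e_M(X)\ \ge\ |X|-t\qquad\text{for every }X\subseteq A,\ |X|\ge2.
\]
A Hamiltonian cycle $C$ of $K_n$, suitably coloured, is rainbow exactly when at most $t$ of its edges avoid $\operatorname{supp}(M)$: those edges receive distinct full colours, the remaining edges lie in $\operatorname{supp}(M)$ and each receives its own light colour, and light colours along $C$ are automatically pairwise distinct because each occupies a unique edge of $K_n$. Recovering a cycle of $G$ covering $A$ from a rainbow Hamiltonian cycle is routine (replace each cycle edge by a $B$-vertex realizing its chosen colour).

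\textbf{Main steps.} First handle the extreme values of $t$. If $t\le1$, the case $|X|=2$ of the displayed inequality forces $m_e\ge1$ on every edge, so $\operatorname{supp}(M)=K_n$ and any Hamiltonian cycle of $K_n$ is rainbow. If $t\ge n$, colour any Hamiltonian cycle of $K_n$ with $n$ distinct full colours. So assume $2\le t\le n-1$, and set $H:=\operatorname{supp}(M)$. Applying the displayed inequality to any $X\subseteq A$ with $|X|=t+1$ (legal since $2\le t+1\le n$) gives $e_M(X)\ge1$, that is, every $(t+1)$-element subset of $A$ spans an edge of $H$; equivalently $\alpha(H)\le t$.

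\textbf{Conclusion.} The last step is the folklore inequality that the minimum number of vertex-disjoint paths covering a graph is at most its independence number: in a minimum path cover, selecting one endpoint from each path yields an independent set, since an edge joining endpoints of two different paths would let us merge them. Hence $H$ can be covered by $p\le\alpha(H)\le t$ vertex-disjoint paths, using $n-p\ge n-t$ edges of $H$ in total. Linking these $p$ paths cyclically in $K_n$ by $p$ additional edges yields a Hamiltonian cycle $C$ with $n$ edges; give each of the $n-p$ path edges its own light colour and each of the $p$ linking edges a distinct full colour (there are $t\ge p$ of them). Then $C$ is rainbow, which completes the proof.

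\textbf{Expected difficulty.} I do not anticipate a genuine obstacle: the content is the observation that, through the reformulation, the double Hall property is equivalent to the single inequality $\alpha(\operatorname{supp}(M))\le t$ on the graph of light colours, after which the path-cover versus independence-number bound does all the work. The only points needing a little care are the boundary regimes $t\le1$, $t\ge n$ and $n=2$ (all immediate), the fact that light colours along a cycle are pairwise distinct (each lies on a unique edge of $K_n$), and the fact that the $p$ linking edges are distinct from the path edges, which holds since $n\ge3$.
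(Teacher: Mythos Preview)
Your proof is correct and follows essentially the same route as the paper's: both pass to the colored multigraph, observe that the double Hall property bounds the independence number of the graph $H$ of light (small) colours by the number $t$ (resp.\ $k$) of full (large) colours, invoke the path-cover-versus-$\alpha$ bound (the paper cites Gallai--Milgram, you supply the direct endpoint argument), and then close the $\le t$ paths into a rainbow Hamiltonian cycle with distinct full colours. Your treatment of the boundary cases $n=2$, $t\le1$, $t\ge n$ is more explicit than the paper's, but the core argument is identical.
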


To prove this result, we recall the concept of path covers and the theorem of Gallai and Milgram~\cite{GallaiMilgram}.
\begin{theorem}[Gallai and Milgram~\cite{GallaiMilgram}]\label{thm:GallaiMilgram}
	The vertices of any graph $G$ can be partitioned into at most $\alpha(G)$ vertex-disjoint paths.
\end{theorem}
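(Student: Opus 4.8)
The plan is to work inside the colored-multigraph reformulation of Section~\ref{rain}. Under the hypothesis $d(v)\in\{2,|A|\}$ the colors split into two types: each vertex $v\in B$ of degree $|A|$ yields a \emph{full color}, assigned to every edge of $K_n$, while each vertex of degree $2$ yields a \emph{short color}, assigned to a single edge. Write $f$ for the number of full colors and let $H$ be the simple graph on $A$ whose edges are exactly those carrying at least one short color. The guiding observation is that a short color belongs to a unique pair of vertices, so short colors on distinct edges are automatically distinct; hence each edge of $H$ can be colored by one of its own short colors with no conflict. This reduces the task of finding a rainbow Hamiltonian cycle to finding a Hamiltonian cycle of $K_n$ that uses at most $f$ edges \emph{outside} $H$: the $H$-edges receive pairwise distinct short colors, and the remaining (at most $f$) edges receive distinct full colors, which are available on every edge.

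Next I would extract a bound on the independence number of $H$ from the double Hall property. If $I\subseteq A$ is independent in $H$ with $|I|\ge 2$, then no degree-$2$ vertex of $B$ has both neighbors in $I$, so the only colors appearing on the clique spanned by $I$ are the $f$ full colors. The double Hall property applied to $X=I$ then forces $f\ge |I|$. Consequently $\alpha(H)\le f$ whenever $f\ge 1$, and when $f=0$ the same reasoning applied to pairs shows that every two vertices span an $H$-edge, i.e.\ $H=K_n$.

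With this in hand I would invoke the Gallai--Milgram theorem (Theorem~\ref{thm:GallaiMilgram}) on $H$: the vertex set $A$ admits a partition into $k\le\alpha(H)\le f$ vertex-disjoint paths, all of whose edges lie in $H$. Arranging these $k$ paths in a cyclic order and joining consecutive endpoints by $k$ connecting edges (always possible since $K_n$ is complete, and fine even for single-vertex paths) produces a Hamiltonian cycle with exactly $k\le f$ edges outside $H$. Coloring the path edges by their short colors and the $k$ connecting edges by $k$ distinct full colors yields a rainbow Hamiltonian cycle, equivalently a cycle of $G$ covering $A$. The case $f=0$ is settled directly, since there $H=K_n$ and any Hamiltonian cycle works, its edges receiving distinct short colors; and the degenerate case $|A|=2$ is immediate, as the double Hall property supplies two vertices of $B$ adjacent to both vertices of $A$.

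I expect the only genuine subtlety to be the bookkeeping that matches the Gallai--Milgram bound $\alpha(H)\le f$ to the budget of exactly $f$ full colors needed for the connecting edges; once this correspondence is in place the argument requires essentially no computation. Secondary points to check carefully are the handling of trivial single-vertex paths in the cycle assembly and the two boundary cases $f=0$ and $|A|=2$.
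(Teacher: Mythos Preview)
Your proposal does not address the stated theorem. The statement to be proved is the Gallai--Milgram theorem (Theorem~\ref{thm:GallaiMilgram}), a classical result that the paper merely cites without proof. What you have written is instead a proof of Theorem~\ref{2,n}: you set up the colored multigraph, bound $\alpha(H)$ by the number $f$ of full colors via the double Hall property, and then \emph{invoke} Gallai--Milgram to partition $A$ into at most $f$ paths. Invoking a theorem is not proving it; nowhere in your proposal do you argue that an arbitrary graph can be covered by $\alpha(G)$ vertex-disjoint paths.

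If your intent was actually to prove Theorem~\ref{2,n}, then your argument is correct and essentially identical to the paper's own proof of that result: both define the graph $H$ of short colors on $A$, both observe $\alpha(H)\le f$ by applying the double Hall property to an independent set, both apply Gallai--Milgram to obtain at most $f$ paths, and both close these paths into a rainbow Hamiltonian cycle using the $f$ full colors on the connecting edges. Your treatment of the boundary cases ($f=0$, $|A|=2$, singleton paths) is a bit more explicit, but the route is the same.
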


\begin{proof}[Proof of {\bf Theorem \ref{2,n}}]
	Let $G'$ be the colored multigraph corresponding to a dHp graph $G(A,B)$ and assume that $k$ of the color cliques are of full size (call them the large colors) while $|B| - k$ of them are of size $2$ (the small colors). Let $H$ be the graph with vertex set $A$ whose edge set is the union of the small color cliques. By the double Hall property, $\alpha(H) \leq k$ holds.
	Thus, we can partition $A$ into at most $k$ vertex-disjoint paths by Theorem~\ref{thm:GallaiMilgram}. This partition corresponds to isolated vertices and rainbow paths in $G'$ using only small colors. We can connect these $k$ components of $G'$ into a rainbow Hamiltonian cycle by using only one edge from each large color class.
\end{proof}

The conjectured existence of a rainbow Hamiltonian cycle in a colored multigraph corresponding to a dHp graph implies that there is a rainbow path of length $n{-}1$ as well.
This motivated us to determine the longest rainbow path guaranteed by the double Hall property of bipartite graphs $G(A,B)$ with $|A|=n$.
In this direction, we have the following result, which implies the existence of a rainbow path of arbitrary length provided that $n$ is large enough, even under far weaker assumptions on the colored edges.

\begin{prop}
	For non-negative integers $k$ and $l$, there is an integer $n_0(k,l)$ such that if the edges of a graph~$G$ on $n\ge n_0(k,l)$ vertices are colored such that any set of vertices $X$ span at least $|X|-k$ colors among them, then $G$ contains a rainbow path of length $l$.
\end{prop}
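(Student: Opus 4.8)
The plan is to pass to a large complete subgraph of $G$ and then grow a rainbow path one vertex at a time, dragging along a large ``reservoir'' clique that avoids every color used so far. First I would read structure out of the hypothesis: applying it to $k+2$ pairwise non-adjacent vertices gives $\alpha(G)\le k+1$, and applying it to a monochromatic clique shows that no color class contains a $K_{k+2}$. Since $\alpha(G)\le k+1$, Ramsey's theorem produces a clique on $m\ge\tfrac12 n^{1/(k+1)}$ vertices in $G$, and by choosing $n_0(k,l)$ large we may assume $m$ exceeds any absolute constant we need. As the hypothesis is inherited by induced subgraphs (with the same $k$), from now on I would assume $G$ is complete on these $m$ vertices.

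For the engine I would maintain, for $i=0,1,\dots,l$, a rainbow path $v_0v_1\cdots v_i$ with colors $c_1,\dots,c_i$ together with a clique $Q_i\ni v_i$, disjoint from $\{v_0,\dots,v_{i-1}\}$, in which none of $c_1,\dots,c_i$ appears, and with $|Q_i|\ge m_i$, where $m_0=m$ and $m_{i+1}\ge\tfrac14 m_i^{1/(4(k+1))}$ (so every $m_i$ still tends to infinity with $n$). Start with $Q_0=V(G)$ and $v_0$ arbitrary. The crucial point of the step is that, since $Q_i$ avoids $c_1,\dots,c_i$ and $v_i\in Q_i$, every edge from $v_i$ into $Q_i\setminus\{v_i\}$ carries a color outside $\{c_1,\dots,c_i\}$; hence any vertex of $Q_i\setminus\{v_i\}$ prolongs the rainbow path, and the only thing to arrange is that the refreshed reservoir $Q_{i+1}$ is still large and contains a neighbor of $v_i$ in the appropriate new color $c_{i+1}$. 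Partition $Q_i\setminus\{v_i\}$ into classes $W_c$ by the color of the edge to $v_i$. If some $|W_c|\ge\sqrt{m_i}$, then the color-$c$ graph on $W_c$ is $K_{k+1}$-free (a color-$c$ clique on $k+1$ vertices in $W_c$ together with $v_i$ would be a monochromatic $K_{k+2}$), so deleting its color-$c$ edges leaves a graph on $W_c$ of independence number $\le k$; Ramsey then gives a clique $Q_{i+1}\subseteq W_c$ of size $\ge\tfrac12 m_i^{1/(2k)}$ avoiding $c$ (hence all of $c_1,\dots,c_i$), and any $v_{i+1}\in Q_{i+1}$ works.

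The hard part will be the complementary case, where every $|W_c|<\sqrt{m_i}$ — so $v_i$ sends more than $\tfrac12\sqrt{m_i}$ distinct colors into $Q_i$ yet no single class is large. Here I would first prove that some $w\in Q_i\setminus\{v_i\}$ has at least $m_i^{1/4}$ edges inside $Q_i$ of color different from $c(v_iw)$; taking $v_{i+1}=w$, restricting to this set $N^*$ of ``wrong-color'' neighbors of $v_{i+1}$ (which omits $v_i$, as $c(v_{i+1}v_i)=c(v_iv_{i+1})$), deleting the color-$c(v_iv_{i+1})$ edges from $G[N^*]$ (independence number $\le k+1$) and running Ramsey once more produces the new reservoir. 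To get such a $w$, suppose not: then for every $w\in Q_i\setminus\{v_i\}$ the color $c(v_iw)$ is ``dominant'' at $w$, missing from at most $m_i^{1/4}+1$ of $w$'s edges in $Q_i$. Fix any $w_1$, set $c^\star=c(v_iw_1)$ and $L=\{x\in Q_i:c(w_1x)=c^\star\}$ (so $|L|>m_i-m_i^{1/4}-2$), and inside $L\setminus(W_{c^\star}\cup\{v_i,w_1\})$ — still more than $m_i-3\sqrt{m_i}$ vertices — choose $q=\lceil\tfrac12\sqrt{m_i}\rceil$ vertices $w_2,\dots,w_{q+1}$ with pairwise distinct colors $c(v_iw_j)$, all different from $c^\star$. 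For every pair $j\ne j'$ the edge $w_jw_{j'}$ has color different from at least one of $c(v_iw_j),c(v_iw_{j'})$, hence is a non-dominant edge at that endpoint; charging the $\binom q2$ pairs this way and using the at most $m_i^{1/4}+1$ non-dominant edges per vertex yields $\binom q2\le q(m_i^{1/4}+1)$, which contradicts $q\ge\tfrac12\sqrt{m_i}$ once $m_i$ is a large enough constant.

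Iterating the step $l$ times gives the desired rainbow path of length $l$; tracking the recursion $m_{i+1}\ge\tfrac14 m_i^{1/(4(k+1))}$ through $l$ steps and through the initial Ramsey bound shows that some $n_0(k,l)$ of iterated-exponential size in $k$ and $l$ suffices, which is all that is claimed. The single fact doing the heavy lifting is that monochromatic cliques have size at most $k+1$, which keeps Ramsey's theorem applicable after every color deletion; the genuinely delicate point, and the one I expect to cost the most work, is the counting in the last case — refreshing the reservoir when the current endpoint meets many colors.
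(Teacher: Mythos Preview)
Your argument is correct, but it takes a very different and far heavier route than the paper's. The paper proves the proposition by a short double induction on $l$ (letting $k$ float): if every vertex is incident to at least $2l$ colors, then any rainbow path of length $l-1$ extends greedily at an endpoint; otherwise some vertex $v$ sees at most $2l-1$ colors, pigeonhole gives a large monochromatic star at $v$ in one color, and after deleting that color from the neighborhood the hypothesis survives with $k$ replaced by $k+1$, so one recurses with parameters $(k+1,l-1)$. This yields the explicit bound $n_0(k,l)=(2l-1)!!\,(k+l)$.

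By contrast, you first pass to a large clique via Ramsey (using $\alpha(G)\le k+1$), then grow the path while dragging along a ``reservoir'' clique that avoids all used colors, refreshing it at each step by another Ramsey application and a counting argument. The counting in your hard case works (in fact the detour through $w_1$, $c^\star$ and $L$ is unnecessary: once every $|W_c|<\sqrt{m_i}$, simply picking $q\approx\tfrac12\sqrt{m_i}$ vertices from distinct $W_c$'s already gives the $\binom{q}{2}\le q\,m_i^{1/4}$ contradiction). What your approach buys is a self-contained ``absorption-style'' machine that never changes $k$; what it costs is a bound of iterated-exponential type in $k$ and $l$, compared with the paper's single-exponential $(2l-1)!!\,(k+l)$, and a page of case analysis in place of a ten-line induction. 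The paper's key simplification---allowing $k$ to increase along the induction rather than keeping it fixed---is exactly what lets it avoid Ramsey and the reservoir bookkeeping altogether.
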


\begin{proof}
	We prove the statement by induction on $l$.
	For $l=0$ we can take $n_0(k,0)=1$, while for $l=1$ we can take $n_0(k,1)=k+1$, because considering as $X$ the set of all vertices guarantees that $G$ has at least one edge.
	For larger $l$, we set $n_0(k,l)=(2l-1)!!(k+l)$, that is we take the product of all odd numbers up to $2l-1$, and multiply it by $k+l$.
	Assume that we have proved the statement for $l-1$ and every $k$.
	
	If every vertex is incident to at least $2l$ colors, then by induction we find a rainbow path of length $l-1$ in $G$, and extend it through one of its endpoints with a new color.
	Indeed, the path contains at most $l-1$ colors, so its end is incident to at least $2l-(l-1)=l+1$ colors that differ from all of these.
	Pick $l+1$ such edges from the end.
	At most $l$ of these goes to a vertex of the path, so we still have one that goes to a new vertex, extending the path.
	
	Otherwise, there is a vertex $v$ that is incident to at most $2l-1$ colors.
	By the pigeonhole principle, there is a color, red say such that $v$ is connected to at least $\lceil\frac{n-1}{2l-1}\rceil\ge \frac{n_0(k,l)}{2l-1}= n_0(k+1,l-1)$ vertices by a red edge.
	Denote these neighbors by $S$, and delete the red edges from $G[S]$.
	By the condition, for any $X\subseteq S$ there are at least $|X|-k-1$ colors among them.
	By induction, we can find a path of length $l-1$ in $G[S]$, and extend it with a red edge to $v$ from any of its ends.
\end{proof}

\section{Two-factors} \label{2-f}

A problem closely related to Problem \ref{main}
is to search for a covering $2$-factor instead of a single cycle.
Unlike for the existence of Hamiltonian cycles, there is a necessary and sufficient condition due to Lovász \cite{LovaszL} for the existence of a graph factor for which the degrees have the same parity and are lower and upper bounded.

For a graph $G$, let $g$ and $f$ be non-negative integer-valued functions on $V(G)$ with $g(v) \leq f(v)$ for every $v \in V(G)$. 
A \emph{$(g,f)$-factor} is a spanning subgraph $H$ such that $g(v) \leq d_H(v) \leq f(v)$ for each $v \in V(G)$.
If additionally $g$ and $f$ satisfies $g(v) \equiv f(v)$ (mod 2) for every $v \in V(G)$, then a \emph{$(g,f)$-parity factor} of $G$ is a $(g,f)$-factor such that $d_H(v) \equiv f(v)$ (mod 2) for all $v \in V(G)$. Observe that in a bipartite graph $G(A,B)$, a $2$-factor covering $A$ is a $(g,f)$-parity factor with $f(v) = 2$ for every $v \in V(G)$, $g(v) = 2$ for every $v \in A$ and $g(v) = 0$ for every $v \in B$.
For a set $S \subseteq V(G)$ and a function $\varphi : V(G) \longrightarrow \mathbb{Z}^+$ we denote $\sum_{v \in S} \varphi(v)$ by $\varphi(S)$.
%Let $e(X,Y)$ denote the number of edges between vertices of $X$ and $Y$.

\begin{theorem}[Lovász~\cite{LovaszL}]\label{lovasz}
	Let $G$ be a graph and $f, g : V(G) \longrightarrow \mathbb{Z}^+$ be functions satisfying $g(v) \leq f(v)$ and $g(v) \equiv f(v) \pmod 2$ for every $v \in V(G)$.
	The graph $G$ has a $(g,f)$-parity factor if and only if for every disjoint sets of vertices $S$ and $T$
	\begin{equation}\label{eq:2-factor}
		g(T)+q(S,T)\le 
		f(S)+\sum_{v\in T} d_{G-S}(v),
	\end{equation}
	where $q(S,T)$ denotes the number of components $C$ of $G- (S\cup T)$ for which $g(C)+e(C,T)$ is odd.
\end{theorem}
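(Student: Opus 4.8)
Since the statement is an ``if and only if,'' I would treat the two directions separately, with essentially all of the work in the sufficiency direction. For \textbf{necessity}, the plan is a direct counting argument that I would dispatch first. Suppose $G$ has a $(g,f)$-parity factor $H$ and fix disjoint $S,T$; write $\mathcal C$ for the union of the components of $G-(S\cup T)$. I would expand the degree sums $\sum_{v\in T} d_H(v)$ and $\sum_{v\in S} d_H(v)$ into edge counts between $S$, $T$, $\mathcal C$ and within $T$, and then apply $d_H(v)\ge g(v)$ on $T$ together with $d_H(v)\le f(v)$ on $S$. The parity hypothesis $g\equiv f \pmod 2$ enters through each component $C$: since $\sum_{v\in C} d_H(v)\equiv f(C)\equiv g(C) \pmod 2$, a component $C$ with $g(C)+e(C,T)$ odd cannot have $H$ use every $G$-edge between $C$ and $T$ while sending no $H$-edge from $C$ into $S$, for that would force $e_H(C,S)+e_H(C,T)\equiv e_G(C,T)$, contradicting the computed parity. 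Hence each of the $q(S,T)$ ``odd'' components must contribute at least one unit of slack (an $H$-edge from $C$ to $S$, or an unused $G$-edge from $C$ to $T$), and charging these defects against the two degree inequalities yields \eqref{eq:2-factor}.

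For \textbf{sufficiency} — the substantive direction — the plan is to reduce the $(g,f)$-parity factor problem to a pure degree-prescribed factor problem and invoke Tutte's $f$-factor theorem (itself provable from the $1$-factor theorem via Tutte's gadget). Concretely, I would build $G^\ast$ from $G$ by attaching to each vertex $v$ an \emph{absorber}: a clique on $f(v)-g(v)$ new vertices, each joined to $v$ and each assigned prescribed degree $1$, while setting $f^\ast(v)=f(v)$ at the original vertices. In any $f^\ast$-factor the clique vertices not matched to $v$ must be perfectly matched inside the clique, so their number is even; since the clique has even order $f(v)-g(v)$ — this is exactly where $g\equiv f\pmod 2$ is used — the number of clique vertices matched to $v$ is also even and ranges over $\{0,2,\dots,f(v)-g(v)\}$. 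Therefore $d_H(v)=f(v)-(\text{that even number})$ realizes precisely the admissible values $\{g(v),g(v)+2,\dots,f(v)\}$, and I would arrange the correspondence so that $f^\ast$-factors of $G^\ast$ restrict exactly to spanning subgraphs $H$ of $G$ with $d_H(v)\equiv f(v)\pmod 2$ and $g(v)\le d_H(v)\le f(v)$, i.e.\ to $(g,f)$-parity factors.

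It then remains to check that Tutte's deficiency criterion for $G^\ast$ is equivalent to \eqref{eq:2-factor} for $G$. Given disjoint $S,T\subseteq V(G)$, I would pass to corresponding sets $(S',T')$ in $G^\ast$, distributing the absorber vertices between $S'$, $T'$, and the new components so as to minimize the $f^\ast$-factor deficiency $f^\ast(S')+\sum_{v\in T'}\bigl(d_{G^\ast-S'}(v)-f^\ast(v)\bigr)-h(S',T')$, and verify that after this optimal placement the quantity equals $f(S)+\sum_{v\in T} d_{G-S}(v)-g(T)-q(S,T)$; placing an absorber into $S'$ is exactly the mechanism that converts the effective bound at $v$ from $f(v)$ down to $g(v)$, producing the $g(T)$ term. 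The hard part will be this bookkeeping: I must show that an \emph{optimal} pair $(S',T')$ in $G^\ast$ always arises from a pair $(S,T)$ in $G$ with the absorbers placed in the prescribed way, and that the odd-component count $h(S',T')$ reproduces $q(S,T)$ under the parity hypothesis. Designing the gadget so that the solution correspondence and the deficiency correspondence hold simultaneously is the delicate point; should a clean gadget prove awkward, the fallback is the primal route — take a subgraph of minimum total degree-deficiency and analyze alternating trails (alternating between factor and non-factor edges) to extract a violating pair $(S,T)$ whenever no parity factor exists.
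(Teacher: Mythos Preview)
The paper does not prove Theorem~\ref{lovasz}; it is quoted from Lov\'asz~\cite{LovaszL} and used as a black box to derive Theorem~\ref{2-factor}. So there is no ``paper's own proof'' to compare your proposal against.

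That said, your plan is along standard lines and essentially sound. The necessity argument is correct: for each component $C$ with $g(C)+e(C,T)$ odd, the parity constraint $e_H(C,S)+e_H(C,T)\equiv \sum_{v\in C}d_H(v)\equiv g(C)\pmod 2$ forces $e_H(C,S)+(e(C,T)-e_H(C,T))\ge 1$, and summing these defects against the inequalities $g(T)\le\sum_{v\in T}d_H(v)$ and $\sum_{v\in S}d_H(v)\le f(S)$ yields~\eqref{eq:2-factor} exactly as you outline. For sufficiency, the absorber gadget (a clique of even order $f(v)-g(v)$ with all new vertices of prescribed degree~$1$) is a known reduction to the Tutte $f$-factor theorem, and the correspondence between $f^\ast$-factors of $G^\ast$ and $(g,f)$-parity factors of $G$ is correct as you describe. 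The part you flag as ``delicate'' --- showing that an optimal $(S',T')$ in Tutte's deficiency formula for $G^\ast$ can always be normalized to one arising from a pair $(S,T)$ in $G$ with absorbers placed canonically, and that the odd-component counts match --- is indeed where the work lies, but it is routine case analysis once one observes that moving absorber vertices between $S'$, $T'$, and the complement never decreases the deficiency beyond the canonical placement. Your fallback (minimum-deficiency subgraph plus alternating-trail analysis) is closer to Lov\'asz's original direct proof and is also viable.
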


Using this result we prove the following theorem.

\begin{theorem}\label{2-factor}
	Let $G$ be a bipartite graph with sides $A$ and $B$ satisfying $|N^2(X)|\geq |X|$ for every  $X \subseteq A$ of size at least $2$. 
	For every  $X \subseteq A$, $|X|\geq2$, there is a family $\mathcal{C}_{X}$ of disjoint cycles in~$G$ such that $V(\mathcal{C}_{X})\cap A=X$.
\end{theorem}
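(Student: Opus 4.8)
The plan is to reduce Theorem~\ref{2-factor} to the single set $X=A$ (as done throughout the paper: once we find a disjoint cycle family covering $A$ in every dHp graph, applying it to the subgraph induced by $X\cup B$, which is again dHp, handles all $X$), and then verify the Lovász criterion of Theorem~\ref{lovasz} for the specific functions $f\equiv 2$, $g|_A\equiv 2$, $g|_B\equiv 0$. So fix a dHp graph $G(A,B)$ and take disjoint sets $S,T\subseteq V(G)$; we must check
\[
g(T)+q(S,T)\le f(S)+\sum_{v\in T}d_{G-S}(v).
\]
Write $S_A=S\cap A$, $S_B=S\cap B$, $T_A=T\cap A$, $T_B=T\cap B$, with $|S_A|=s_A$, etc. The left side is $2t_A + q(S,T)$, and the right side is $2(s_A+s_B)+\sum_{v\in T_A}d_{G-S}(v)+\sum_{v\in T_B}d_{G-S}(v)$. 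The vertices of $T_B$ contribute nonnegatively on the right and nothing on the left except through $q$ (they can only change component parities), so the heart of the matter is to bound $q(S,T)$, the number of components $C$ of $G-(S\cup T)$ with $g(C)+e(C,T)$ odd.

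\textbf{Key steps.} First I would dispatch the easy regimes. If $s_A+s_B$ is already large relative to $t_A$ and the number of components, there is nothing to prove; in particular if $T_A=\emptyset$ the left side is just $q(S,T)\le 2(s_A+s_B)+\sum_{v\in T_B}d_{G-S}(v)$, which I would argue by a Hall-type / component-counting estimate using that removing few vertices from $B$ cannot create many odd components in a dHp graph. The essential case is when $T_A$ is nonempty and $s_A+s_B$ is small. Here the double Hall property enters: a component $C$ of $G-(S\cup T)$ that meets $A$ in a set $X_C$ with $|X_C|\ge 2$ must contain $N^2(X_C)\cap(B\setminus(S\cup T))$ inside $C$, which forces components to be ``fat'' in $B$, so there cannot be too many of them. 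More precisely, letting the components meeting $A$ be $C_1,\dots,C_r$ with $A$-parts $X_1,\dots,X_r$, the sets $N^2(X_i)$ are pairwise disjoint subsets of $B$ of size $\ge|X_i|$, all avoiding nothing except $S_B\cup T_B$ (a color in $N^2(X_i)$ could in principle also be adjacent to $S_A\cup T_A$, but it still lies in $B$ and is used by at most the one component $C_i$ it is connected to within $G-(S\cup T)$ — I'd need to be a little careful and possibly charge such colors to $S\cup T$). Summing, $\sum|X_i|\le \sum|N^2(X_i)|\le |B|-$(something), but more usefully this gives a bound of the form $r\le |S_B|+|T_B|+(\text{number of singleton }X_i)$; singleton components contribute a vertex of $A$ that has $\ge 2$ neighbors, all of which lie in $S_B\cup T_B$ or in other components, again boundable. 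Combining these with the slack $2s_B$ on the right (each removed $B$-vertex is paid for twice, while it can kill at most a controlled number of parities) should close the inequality.

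\textbf{Main obstacle.} The delicate point is the parity bookkeeping in $q(S,T)$: a single vertex removed from $B$ can flip the parity of $g(C)+e(C,T)$ for several components, and components that meet $A$ in exactly one vertex (or not at all, if $G$ has isolated-in-$A$ structure — though $g=0$ on $B$ makes $B$-only components contribute only through $e(C,T)$) are the ones not directly controlled by the double Hall property. I expect the crux to be a careful charging argument: assign each odd component either to a vertex of $S\cup T$ it is ``near'' or to its own $2$-neighborhood in $B$, show the multiplicity of the charging is at most $2$ (matching the coefficient $f\equiv 2$), and absorb the small-$X_i$ exceptions using $\sum_{v\in T_A}d_{G-S}(v)\ge 0$ together with the fact that a dHp graph on $\ge 2$ $A$-vertices has $e(G)$ large (Theorem~\ref{low}, if needed) so degrees cannot all be tiny. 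I would first try to prove the clean bound $q(S,T)\le 2s_B + 2t_B + (\text{correction} \le \sum_{v\in T}d_{G-S}(v))$ and see whether the constant $2$ comes out for free from Lemma~\ref{atmostone}-style orientation of an auxiliary ``component--$B$-vertex'' incidence multigraph, which is the technique the paper already leans on.
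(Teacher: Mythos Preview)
Your setup is correct --- the reduction to $X=A$ and the choice of $f\equiv 2$, $g|_A\equiv 2$, $g|_B\equiv 0$ in Theorem~\ref{lovasz} are exactly what the paper does --- but the verification strategy you outline is pointed at the wrong objects and would not close.

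The paper's argument rests on three short observations you miss. First, one may assume $T\subseteq A$: adding a vertex $v\in B$ to $T$ leaves $|T\cap A|$ and $|S|$ unchanged and can raise $q(S,T)$ by at most $d_{G-S}(v)$, so the inequality only gets easier. After this reduction your ``$T_A=\emptyset$'' case is simply $T=\emptyset$, which is trivial; there is no Hall-type estimate needed there. Second, with $T\subseteq A$, every component $C$ counted by $q(S,T)$ has $e(C,T)$ odd, hence contains some vertex of $B\setminus S$ with an odd number of neighbours in $T$; thus $q(S,T)\le |N^{\mathrm{odd}}(T)\setminus S|$. Third, one applies the double Hall property to $T$ itself (not to the $A$-parts of components): $|T|\le |S\cap B|+|N^2(T)\setminus S|$. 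A one-line double count of the edges from $T$ to $B\setminus S$,
\[
\sum_{v\in T} d_{G-S}(v)=\sum_{i\ge 1} i\,|N^{i!}(T)\setminus S|\ \ge\ |N^{\mathrm{odd}}(T)\setminus S|+2|N^2(T)\setminus S|,
\]
then yields the Lov\'asz inequality immediately. No charging, no orientation lemma, and no appeal to Theorem~\ref{low} is needed.

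Your plan instead applies dHp to the sets $X_i=C_i\cap A$ for components $C_i$ of $G-(S\cup T)$. But these $X_i$ are disjoint from $T$, so the bound $|N^2(X_i)|\ge|X_i|$ carries no information about $e(C_i,T)$ or its parity; it controls the ``width'' of components in $B$, which is irrelevant to $q(S,T)$. The parity bookkeeping you flag as the main obstacle is not an obstacle at all once $q$ is bounded by $|N^{\mathrm{odd}}(T)\setminus S|$ and absorbed into the degree sum. The key missing idea is simply: apply dHp to $T$, not to the complement of $S\cup T$.
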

\begin{proof}
	It is enough to show that there exists a 2-factor covering $A$ for $|A|\geq2$.
	We apply Theorem~\ref{lovasz} with $f(v) = 2$ for every $v \in V(G)$ and $g(v)=2$ for $v\in A$ and $g(v)=0$ for $v\in B$.
	This way a $(g,f)$-parity factor corresponds to a $2$-factor covering $A$.
	We need to show that inequality $(\ref{eq:2-factor})$ holds for any disjoint subsets $S$ and $T$.
	Notice that $g(C)$ is always even, so this never changes the parity of $g(C)+e(C,T)$, so we can forget about that term.
	Using this, we can rewrite the inequality~$(\ref{eq:2-factor})$ into
	\begin{equation}\label{eq:2-factor-rewritten}
		2|T\cap A|+q(S,T)\le 
		2|S|+\sum_{v\in T} d_{G-S}(v),
	\end{equation}
	where $q(S,T)$ denotes the number of components $C$ of $G- (S\cup T)$ for which $e(C,T)$ is odd.
	
	Firstly, notice that we can assume that $T\cap B=\emptyset$.
	Indeed, if the statement is true for subsets $S$ and $T$, then adding a vertex $v\in B-(S\cup T)$ to $T$ would not change $|T\cap A|$ or $|S|$, while it could increase $q(S,T)$ only by at most $d_{G-S}(v)$.
	
	If $T = \emptyset$, then inequality $(\ref{eq:2-factor-rewritten})$ is trivially satisfied. 
	If $|T|=1$, and $v$ is the only vertex in~$T$, then $q(S,T) \leq d_{G-S}(v)$ as every component $C$ of $G-(S\cup T)$ with odd $e(C,T)$ must contain a neighbor of $v$. 
	Thus, inequality $(\ref{eq:2-factor-rewritten})$ holds unless $S = \emptyset$ and every component of $G-v$ contains exactly one neighbor of $v$. 
	This is not possible, because considering any vertex $w \in A$ different from $v$ and using the assumed condition for $X=\{v,w\}$, we obtain that $v$ must have more than 1 neighbor in the component of $G-v$ containing $w$.
	
	In the remaining case $|T|\geq2$, we apply the assumed condition for $X=T$ and obtain
	\[|T|\le |N^2(T)| \leq |S\cap B|+|N^2(T)-S|.\]
	Now, let $N^{i!}(T)$ be the set of vertices having exactly $i$ neighbors in $T$, and  
	$N^{odd}(T)=\cup_{i \text{ is odd}} N^{i!}(T)$ be the set of vertices that have an odd number of neighbors in $T$.
	Notice that $q(S,T)\le |N^{odd}(T)-S|$ as every component counted 
	in $q(S,T)$ must contain a vertex from $N^{odd}(T)-S$.
	By double counting the edges between $T$ and $B-S$, we have 
	\begin{equation*}
		\begin{aligned}
			\sum_{v\in T} d_{G-S}(v)=\sum_i i|N^{i!}(T)-S|&\ge |N^{odd}(T)-S|+2|N^2(T)-S| \\ &\ge q(S,T)+2|N^2(T)-S|.
		\end{aligned}
	\end{equation*}
	Combining the two inequalities we obtain 
	\begin{equation*}
		\begin{aligned}
			2|T|+q(S,T) &\le (2|S\cap B|+2|N^2(T)-S|) + \left(\sum_{v\in T} d_{G-S}(v) - 2|N^2(T)-S|\right)   \\ 
			&\le 2|S|+\sum_{v\in T} d_{G-S}(v)
		\end{aligned}
	\end{equation*}
	which finishes the proof.    
\end{proof}

For general graphs, not necessarily bipartite, the following result of Belck \cite{Belck} provides a condition for having a 2-factor.

\begin{theorem}[Belck \cite{Belck}]\label{Belck}
	A graph has a $2$-factor if and only if
	\begin{equation}\label{eq:belck}
		|T| \leq |S| + \sum_C \left\lfloor \frac{1}{2}e(C,T)\right\rfloor
	\end{equation}
	holds for each pair of disjoint sets $S$, $T \subseteq V(G)$, where $T$ is an independent set and $C$ ranges over all components of $G - (S\cup T)$.
\end{theorem}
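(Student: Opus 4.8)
The plan is to derive Theorem~\ref{Belck} from Lov\'asz's Theorem~\ref{lovasz}. A $2$-factor of $G$ is a spanning $2$-regular subgraph, hence precisely a $(g,f)$-parity factor with $f(v)=g(v)=2$ for every $v\in V(G)$ (here $g\le f$ and $g\equiv f\pmod 2$ hold trivially). So I would (i) specialize the inequality~$(\ref{eq:2-factor})$ of Theorem~\ref{lovasz} to this choice of $f$ and $g$, (ii) rewrite it in a form close to~$(\ref{eq:belck})$, and (iii) show that it suffices to test pairs $(S,T)$ with $T$ independent.

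For steps (i)--(ii): with $f\equiv g\equiv 2$ one has $g(T)=2|T|$ and $f(S)=2|S|$, and since $g(C)=2|C|$ is even, $q(S,T)$ is exactly the number of components $C$ of $G-(S\cup T)$ with $e(C,T)$ odd. Double counting the edges of $G-S$ meeting $T$ gives $\sum_{v\in T}d_{G-S}(v)=2e(G[T])+\sum_C e(C,T)$, and writing each $e(C,T)$ as twice its half-floor plus its parity bit yields $\sum_C e(C,T)=2\sum_C\lfloor\tfrac12 e(C,T)\rfloor+q(S,T)$. Substituting these into~$(\ref{eq:2-factor})$ and cancelling a common $q(S,T)$ and a factor $2$, the condition of Theorem~\ref{lovasz} for a fixed pair $(S,T)$ becomes
\begin{equation*}
|T|\ \le\ |S|+e(G[T])+\sum_C\Big\lfloor\tfrac12 e(C,T)\Big\rfloor .
\end{equation*}
For independent $T$ the term $e(G[T])$ vanishes and this is exactly~$(\ref{eq:belck})$, so the implication ``$G$ has a $2$-factor $\Rightarrow$ $(\ref{eq:belck})$ holds for all such pairs'' is immediate.

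For step (iii) I would argue by minimal counterexample: assume $(\ref{eq:belck})$ holds for every disjoint pair with $T$ independent, but the displayed inequality fails for some pair, and choose a failing pair $(S,T)$ with $|T|$ smallest. Then $T$ is not independent, so $G[T]$ has an edge, and the goal is to produce a failing pair with strictly smaller $|T|$. If some endpoint $v$ of an edge of $G[T]$ satisfies $d_{G[T]}(v)\ge 2$, move $v$ into $S$: then $S\cup T$ is unchanged, $e(G[T\setminus\{v\}])=e(G[T])-d_{G[T]}(v)$, and each floor term can only decrease, so the right-hand side changes by at most $1-d_{G[T]}(v)\le -1$ while $|T|$ drops by $1$, and the strict violation persists. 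Otherwise $G[T]$ is an induced matching; fix one of its edges $uv$. Moving $v$ into $S$ now changes the right-hand side by $\Delta:=\sum_C\big(\lfloor\tfrac12 e(C,T\setminus\{v\})\rfloor-\lfloor\tfrac12 e(C,T)\rfloor\big)\le 0$; if $\Delta\le -1$ the violation again persists with smaller $|T|$. If $\Delta=0$, then every component of $G-(S\cup T)$ sends $v$ either no edge, or exactly one edge while itself carrying an odd number of edges to $T$. In that case I would instead delete $v$ from $T$ \emph{without} adding it to $S$: the components adjacent to $v$ merge with $\{v\}$ into a single new component $C^{\ast}$ with $e(C^{\ast},T\setminus\{v\})$ odd, and a short computation using these forced parities shows the right-hand side drops by exactly $1$ while $|T|$ drops by $1$. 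In every case the minimality of $|T|$ is contradicted, so no failing pair exists; by Theorem~\ref{lovasz}, $G$ has a $2$-factor.

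The one delicate point is the induced-matching subcase of step (iii): absorbing an endpoint of a $T$-edge into $S$ there yields only a \emph{non-strict} inequality, so one must exploit the parity information forced by $\Delta=0$ and instead eject the vertex from $S\cup T$, carefully tracking how the components of $G-(S\cup T)$ coalesce and how the parities $e(C,T)\bmod 2$ transform. The rest is routine bookkeeping. (Belck's theorem can alternatively be deduced from Tutte's $f$-factor theorem via the standard vertex-gadget reduction, but recovering the exact form~$(\ref{eq:belck})$ from it is of comparable technical cost.)
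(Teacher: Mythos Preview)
The paper does not prove Theorem~\ref{Belck}; it is quoted from Belck's 1950 paper as a known result and then applied to establish Theorem~\ref{nonbip-2-factor}. So there is no ``paper's own proof'' to compare against.

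That said, your derivation from Lov\'asz's Theorem~\ref{lovasz} is correct. Steps (i)--(ii) are routine and accurate: specializing to $f\equiv g\equiv 2$ and using $\sum_C e(C,T)=2\sum_C\lfloor e(C,T)/2\rfloor+q(S,T)$ does rewrite Lov\'asz's condition as $|T|\le |S|+e(G[T])+\sum_C\lfloor e(C,T)/2\rfloor$, which for independent $T$ is precisely~(\ref{eq:belck}). In step~(iii) your minimal-counterexample reduction works. When some $v\in T$ has $d_{G[T]}(v)\ge 2$, transferring $v$ to $S$ changes the right-hand side by at most $1-d_{G[T]}(v)\le -1$ and the violation persists. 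When $G[T]$ is an induced matching and $\Delta\le -1$, the same transfer again preserves the strict inequality. In the remaining case $\Delta=0$, your parity observation is exactly what is forced: each component $C$ adjacent to $v$ satisfies $e(C,\{v\})=1$ and $e(C,T)$ odd. Writing $\mathcal C_v$ for these components, the merged component $C^\ast=\{v\}\cup\bigcup_{C\in\mathcal C_v}C$ satisfies
\[
e(C^\ast,T\setminus\{v\})=1+\sum_{C\in\mathcal C_v}\bigl(e(C,T)-1\bigr),
\]
which is odd, and hence $\lfloor e(C^\ast,T\setminus\{v\})/2\rfloor=\sum_{C\in\mathcal C_v}(e(C,T)-1)/2=\sum_{C\in\mathcal C_v}\lfloor e(C,T)/2\rfloor$. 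Thus the floor sum is unchanged, $e(G[T])$ drops by $1$, and $|S|$ is unchanged, so the right-hand side drops by exactly $1$, matching the drop in $|T|$. The contradiction with minimality follows, and the argument is complete.
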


Using this, we can prove an analog of Theorem~\ref{2-factor} for not necessarily bipartite graphs.

\begin{theorem}\label{nonbip-2-factor}
	Let $G$ be an arbitrary graph on at least $2$ vertices satisfying $|N^2(X)|\geq |X|$ for every  $X \subseteq V(G)$ of size at least $2$.  
	Then there exists a family of disjoint cycles covering $V(G)$.
\end{theorem}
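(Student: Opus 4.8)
The plan is to mimic the proof of Theorem~\ref{2-factor}, but replace Lov\'asz's parity factor theorem with Belck's Theorem~\ref{Belck}. So I would fix disjoint sets $S, T \subseteq V(G)$ with $T$ independent and aim to verify inequality~(\ref{eq:belck}), namely $|T| \le |S| + \sum_C \lfloor \frac12 e(C,T)\rfloor$, where $C$ ranges over components of $G-(S\cup T)$. The trivial cases $|T|\le 1$ should be disposed of first: for $|T|=0$ the inequality is immediate, and for $|T|=1$, say $T=\{v\}$, each component $C$ of $G-(S\cup v)$ contributes $\lfloor \frac12 e(C,v)\rfloor$, so the inequality can only fail when $S=\emptyset$ and every component of $G-v$ contains at most one neighbor of $v$; but applying the $2$-neighborhood condition to $X=\{v,w\}$ for any other vertex $w$ shows $v$ has $\ge 2$ neighbors in the component of $w$, a contradiction.

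For the main case $|T|\ge 2$, I would apply the hypothesis directly to $X=T$: since $G$ is now arbitrary rather than bipartite, $N^2(T)$ may intersect $T$ itself, so I should be careful and write $|T| \le |N^2(T)| \le |S| + |N^2(T)\setminus S|$, or more precisely split $N^2(T)$ into the part inside $S$, the part inside $T$, and the part in the components. The cleanest bookkeeping: let $N^2(T)\setminus(S\cup T)$ denote the vertices outside $S\cup T$ with $\ge 2$ neighbors in $T$; these all lie in components $C$. The key counting inequality I want is that $\sum_C \lfloor \frac12 e(C,T)\rfloor$ is at least roughly $|N^2(T)\setminus(S\cup T)|$ minus a correction for how these vertices bunch into components. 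This is exactly where the floor function makes things more delicate than in the bipartite parity-factor argument, where $q(S,T)$ counted components with odd $e(C,T)$. Here, for a single component $C$, $\lfloor\frac12 e(C,T)\rfloor$ counts pairs of $C$--$T$ edges, and I can lower bound it using that $C$ contains some number of vertices of $N^2(T)$ (each contributing $\ge 2$ edges) plus other vertices (each contributing $\ge 1$); a vertex of $N^2(T)$ already forces $e(C,T)\ge 2$ hence $\lfloor\frac12 e(C,T)\rfloor\ge 1$, and additional $N^2(T)$-vertices or extra edges push it up proportionally.

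Concretely, for each component $C$ let $a_C = |C \cap N^2(T)|$ and let $b_C$ be the number of vertices of $C$ with exactly one neighbor in $T$; then $e(C,T) \ge 2a_C + b_C$, so $\lfloor \frac12 e(C,T)\rfloor \ge \lfloor a_C + \frac{b_C}{2}\rfloor \ge a_C$. Summing, $\sum_C \lfloor\frac12 e(C,T)\rfloor \ge \sum_C a_C = |N^2(T)\setminus(S\cup T)|$. Meanwhile the hypothesis gives $|T| \le |N^2(T)| = |N^2(T)\cap S| + |N^2(T)\cap T| + |N^2(T)\setminus(S\cup T)| \le |S| + |T| \,?$ — this is circular if $N^2(T)\cap T$ is large, so I need to be more careful: the right move is to observe $|N^2(T)\cap T| \le |T|$ is useless, and instead I should note $N^2(T)\cap T$ cannot be all of $T$ unless... hmm. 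Actually the honest fix is: $|T| \le |N^2(T)| \le |N^2(T)\cap S| + |N^2(T)\setminus S|$, and $N^2(T)\setminus S = (N^2(T)\cap T) \cup (N^2(T)\setminus(S\cup T))$. To close the argument I want $|N^2(T)\cap T|$ to be controlled; the cleanest route is probably to pick $T$ \emph{inclusion-minimal} among sets violating (\ref{eq:belck}), or to argue that vertices of $T$ with many neighbors in $T$ also contribute — but $T$ is independent, so $N^2(T)\cap T=\emptyset$ automatically! Since $T$ is an independent set, no vertex of $T$ has any neighbor in $T$, hence $N^2(T)\cap T = \emptyset$. That resolves it: $|T| \le |N^2(T)| \le |S| + |N^2(T)\setminus(S\cup T)| \le |S| + \sum_C \lfloor\frac12 e(C,T)\rfloor$, which is exactly (\ref{eq:belck}).

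The main obstacle, then, is essentially the one I just navigated: making sure the floor function does not lose too much, and realizing that independence of $T$ (a hypothesis in Belck's theorem, absent from the bipartite setting in a usable form) is precisely what makes the $2$-neighborhood count go through without circularity. A secondary point to double-check is the $|T|=1$ degenerate case and whether "at least $2$ vertices" in the hypothesis of Theorem~\ref{nonbip-2-factor} is enough to run the $\{v,w\}$ argument — it is, since $|V(G)|\ge 2$ supplies the needed second vertex $w$. Once these are in place the proof is a short paragraph parallel to that of Theorem~\ref{2-factor}.
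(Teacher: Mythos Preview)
Your proposal is correct and takes essentially the same approach as the paper: both verify Belck's inequality~(\ref{eq:belck}) directly, handling $|T|\le 1$ via the $\{v,w\}$ argument and $|T|\ge 2$ by bounding $\sum_C\lfloor\frac12 e(C,T)\rfloor \ge |N^2(T)\setminus(S\cup T)|$ since each such vertex contributes at least two $C$--$T$ edges. Your explicit observation that $T$ being independent forces $N^2(T)\cap T=\emptyset$ is exactly the point the paper uses implicitly when asserting that every vertex of $N^2(T)-S$ lies in some component $C$.
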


\begin{proof}
	We show that $G$ satisfies the condition presented in Theorem~\ref{Belck}.
	Let $T \subseteq V(G)$ be an independent set and $S \subseteq V(G)$ any set disjoint from $T$. 
	If $|T|\leq 1$, then inequality $(\ref{eq:belck})$ easily holds unless $|T|=1$, $S = \emptyset$ and the only vertex $v \in T$ has exactly one neighbor in each component of $G-v$. But this is impossible, because $G$ has at least 2 vertices and using the assumed condition for $X$ containing $v$ and any vertex we get that some component of $G-v$ needs to have more neighbors of $v$.
	In the remaining case $|T|\geq2$, applying the assumption on $G$ for $X=T$, we get $|T|\le |N^2(T)|\le |S|+|N^2(T)-S|$.
	Notice $|N^2(T)-S|\le \sum_C \left\lfloor \frac{1}{2}e(C,T)\right\rfloor$ since every $v \in N^2(T)-S$ is contained in one of the components $C$ of $V(G) - (S \cup T)$ and contributes at least~$2$ to the edge count $e(C,T)$. Thus, 
	\[
	|T|\le |S| + |N^2(T)-S|  \le    
	|S|+\sum_{C} \left\lfloor \frac{1}{2}e(C,T)\right\rfloor\]
	as required.
\end{proof}

\section{Edge density of dHp graphs} \label{edge}

In this section, we determine the order of magnitude of the minimal number of edges in a dHp graph. 

\begin{theorem}\label{low}
	Every dHp graph $G(A,B)$ with $|A|=n$ and without isolated vertices has at least $\frac{1}{2}n\log_2n + |B|$ edges.    
\end{theorem}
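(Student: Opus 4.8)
The plan is to lower-bound the edge count by accounting separately for the $|B|$ edges forced by the no-isolated-vertices hypothesis on $B$ and for a $\frac12 n\log_2 n$ contribution coming from the double Hall property on the $A$-side. Every vertex $b\in B$ has degree at least $1$, contributing $|B|$ edges; these edges each have exactly one endpoint in $B$, so to finish it suffices to show that $\sum_{b\in B} d(b) \ge \frac12 n\log_2 n + |B|$, i.e. that the "excess" degree $\sum_{b}(d(b)-1) \ge \frac12 n\log_2 n$. The natural way to see this is via the colored-multigraph reformulation of Section~\ref{rain}: a vertex $b$ of degree $d$ corresponds to a color class that is a clique on $d$ vertices, carrying $\binom{d}{2}$ colored edge-slots; but $d-1 \ge \binom{d}{2}/(d-1)$ is too weak, so instead I would work directly with the entropy-type / divide-and-conquer bound that $\log_2 n$ naturally suggests.

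The key step is a recursive argument on $n=|A|$. Consider a partition or a balanced bisection of $A$ into $A_1, A_2$ with $|A_i|\approx n/2$. The double Hall property is inherited by each induced subgraph $G[A_i\cup N(A_i)]$ on at least $2$ vertices, so by induction each side needs roughly $\frac12(n/2)\log_2(n/2) + |B_i|$ edges, where $B_i$ is the set of $B$-vertices with a neighbor in $A_i$; summing gives $\frac12 n\log_2(n/2) = \frac12 n\log_2 n - \frac12 n$. It then remains to recover the missing $\frac12 n$: this should come from the edges "crossing" the bisection together with the double-Hall condition applied to sets $X$ straddling $A_1$ and $A_2$ — roughly, for the 2-neighborhood of a pair with one vertex in each part to be nonempty, there must be enough $B$-vertices adjacent to both parts, and one counts these crossing contributions carefully (a $B$-vertex of degree $d$ with $d_i$ neighbors in $A_i$ is double-counted in $B_1\cap B_2$, so the term $|B_1|+|B_2|$ overshoots $|B|$ by exactly $|B_1\cap B_2|$, which must be paid back, and the surplus degree $\sum_b (d_1(b)+d_2(b) - \max) $ of the crossing colors is what supplies the extra $\frac12 n$). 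The base case $n=2$ is immediate: the dHp forces $|N^2(A)|\ge 2$, hence at least two $B$-vertices of degree $2$, giving $4 = \frac12\cdot 2\cdot 1 + \text{(at least }1\text{)}$ after a small check, and in general small $n$ are handled directly.

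The main obstacle I anticipate is making the recursion's bookkeeping tight enough: the $\frac12 n\log_2 n$ coefficient is exactly the one produced by a clean binary recursion $T(n) = 2T(n/2) + \frac12 n$, so the crossing term must be shown to contribute at least $\frac12 n$ edges (equivalently $\frac12 n$ units of excess degree) and not merely $\Omega(n)$. The delicate point is that a single $B$-vertex can be double-counted across many levels of the recursion, so I would either (a) set up the induction with a careful potential that charges each color clique's $\binom{d}{2}$ edge-slots to the $\log_2 d$-depth subtree it "lives in," tracking that $\sum_b \binom{d(b)}{2}$-type quantities telescope correctly, or (b) bypass recursion entirely and give a direct charging/entropy argument: assign to each vertex $a\in A$ a fractional weight and show $\sum_a (\text{something involving } \log_2) \le 2e(G) - 2|B|$ using that every pair needs two colors. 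I expect approach (a) to be what the authors intend, with the non-isolated-vertex hypothesis on $B$ used precisely to separate the additive $|B|$ term from the multiplicative $\log$ term.
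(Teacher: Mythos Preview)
Your divide-and-conquer recursion is exactly the paper's approach, and you have correctly located the crux: forcing the crossing contribution to be at least $\tfrac12 n$. What is missing are the two ideas that make the bookkeeping and the constant come out cleanly.

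First, instead of tracking $|B_1|,|B_2|$ and worrying about double counting, work with the excess-degree potential
\[
f(W)=\sum_{b\in B}\max\bigl(0,\,|N(b)\cap W|-1\bigr),
\]
so that $f(A)=e(G)-|B|$ once degree-$1$ vertices of $B$ are discarded. This satisfies the \emph{exact} identity
\[
f(S\dot\cup T)=f(S)+f(T)+\bigl|\{\,b\in B:\ N(b)\text{ meets both }S\text{ and }T\,\}\bigr|
\]
for every partition $A=S\dot\cup T$, which dissolves the overlap issue you flag.

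Second---and this is what yields precisely $\tfrac12 n$---do not fix a balanced bisection; take a \emph{uniformly random} partition of $A$. Each $b\in B$ with $d(b)\ge 2$ has its neighborhood split with probability at least $\tfrac12$, so the expected number of crossing colors is at least $|B|/2$. Applying dHp to $X=A$ gives $|B|\ge n$ (after the degree-$1$ cleanup), hence the expected cross term is at least $n/2$. Jensen's inequality for the convex map $x\mapsto\tfrac12 x\log_2 x$ handles the randomness of $|S|,|T|$ (both have mean $n/2$), and the induction closes on the nose:
\[
f(A)\ \ge\ 2\cdot\tfrac12\cdot\tfrac n2\log_2\tfrac n2+\tfrac n2\ =\ \tfrac12 n\log_2 n.
\]
Your deterministic-bisection plan would still need to exhibit a single bisection with $\ge n/2$ crossing colors; that is exactly what the averaging supplies for free, and without it the recursion does not obviously close at the right constant.
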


\begin{proof}
	The vertices in $B$ of degree $1$ do not influence the double Hall property. 
	These vertices contribute the same amount to the number of edges and to the bound.
	Hence, we may assume that every vertex in~$B$ has degree at least $2$. 
	
	For convenience, instead of the dHp graph $G$, we use the corresponding colored multigraph~$G'$ on $n$ vertices as explained in Section~\ref{rain}. 
	For a colored multigraph $H$, let $m_i(H)$ denote the order of the clique in color $i$ contained in $H$, and $i \in E(H)$ means that color $i$ is assigned to some edge of $H$.  
	For any subset $W$ of $V(G')$, we set 
	$$f(W) = \sum_{i\,:\, i\in E(G'[W])}(m_i(G'[W])-1).$$
	
	The statement is equivalent to showing the inequality $f(V(G')) \geq \frac{1}{2}n\log_2 n$. We proceed by induction on $n$. For $n\in \{2,3\}$ the statement clearly holds, so let $n \geq 4$. 
	For any partition of the vertex set $V(G')=S\dot\cup T$ we have
	$$f(S\dot\cup T)=f(S)+f(T)+|\{i: i\in E(G'[S,T])\}|.$$
	Consider a random partition of $V(G')$ into two parts $S$ and $T$. Observe the expected value of $|\{i: i\in E(G'[S,T])\}|$ is at least $|B|/2$ as every vertex in $B$ has degree at least $2$. 
	Also $|B|\geq n$ by the double Hall property used for $X=A$.
	Together with the inductive hypothesis and the convexity of the function $\frac{1}{2}x\log_2 x$, this yields
	\[f(V(G'))\geq 2\cdot\frac{n}{4}\log_2{\frac{n}{2}}+\frac{n}{2}= \frac{1}{2}n\log_2{n}\]
	as required.
\end{proof}

Next we show the previous bound has the right order of magnitude.

\begin{theorem}\label{up} For every integer $n \geq 2$, there exists a dHp graph $G(A,B)$ with $|A|=n$ and at most $n\log_2n+n$ edges.    
\end{theorem}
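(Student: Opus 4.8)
The plan is to build a dHp graph recursively, mimicking the divide-and-conquer structure that makes the lower bound in Theorem \ref{low} tight. Concretely, for $|A|=n$ I will split $A$ into two halves $A_1,A_2$ of sizes $\lceil n/2\rceil$ and $\lfloor n/2\rfloor$, recursively equip the induced subgraphs on $A_1$ and $A_2$ with dHp-witnessing vertices of $B$, and then add roughly $n$ new ``crossing'' vertices in $B$, each of degree $2$ with one neighbor in $A_1$ and one in $A_2$, arranged so that every pair $\{a,a'\}$ with $a\in A_1$, $a'\in A_2$ gets enough common $2$-neighbors. In the colored-multigraph language of Section~\ref{rain} this is cleanest: I want a colored $K_n$ in which the restriction to each half is a good coloring by induction, and the bipartite part $E(A_1,A_2)$ is colored so that it contributes enough colors to every vertex set while using only $O(n)$ colors total across the level. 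The edge count then satisfies $E(n)\le 2E(n/2)+O(n)$, which solves to $O(n\log n)$; chasing the constant carefully should give the claimed $n\log_2 n+n$.

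First I would set up the induction with small base cases ($n\in\{2,3\}$, say a single vertex of $B$ of full degree plus, for $n=3$, enough degree-$2$ vertices), and state the bookkeeping invariant I need: a dHp graph on $|A|=n$ with at most $n\log_2 n+n$ edges, perhaps with an auxiliary property (such as ``every single vertex of $A$ has degree at least $2$'') that makes the recursion go through. Next I would describe the crossing gadget in detail: add $|A_2|=\lfloor n/2\rfloor$ new vertices $b_1,\dots$ to $B$, where $b_j$ is joined to the $j$-th vertex of $A_2$ and to every vertex — no, that would be too many edges; instead $b_j$ joined to the $j$-th vertex of $A_2$ and a cyclic shift so that across a few rounds of such matchings every cross pair $\{a,a'\}$ is covered. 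The key numeric check is that for any $X\subseteq A$ with $X_i=X\cap A_i$, writing the colors as (colors inside $A_1$) $\sqcup$ (colors inside $A_2$) $\sqcup$ (crossing colors seen by $X$), I get at least $|X_1| + |X_2| = |X|$ colors provided $|X_1|,|X_2|\ge 2$ from induction, and I must handle the boundary cases $|X_1|\le 1$ or $|X_2|\le 1$ separately (here the crossing colors must make up the deficit, which is where the design of the matchings matters).

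I expect the main obstacle to be exactly this case analysis for small $X_1$ or $X_2$ together with pinning down the constant in the $O(n)$ term so that the final bound is the stated $n\log_2 n+n$ rather than something with a larger additive or multiplicative slack. If $X_1=\emptyset$, then $X\subseteq A_2$ and induction on the $A_2$-part suffices; if $|X_1|=1$, say $X_1=\{a\}$, I need the crossing vertices incident to $a$ to supply at least one color beyond what $A_2$ alone gives, and to guarantee $|N^2(X)|\ge |X|$ I may need each vertex of $A_1$ to sit in at least two crossing matchings, which costs a constant factor more edges — I would budget for this from the start. A secondary technical point is making the recursion respect the auxiliary ``minimum degree $2$ in $A$'' invariant, which the crossing matchings should automatically provide. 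Once the gadget is fixed, verifying dHp is a finite check per case, and the edge recursion $E(n)\le E(\lceil n/2\rceil)+E(\lfloor n/2\rfloor)+cn$ unwinds routinely; I would present the gadget, the dHp verification split by cases, and the edge count in that order.
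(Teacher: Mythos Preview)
Your divide-and-conquer framework is exactly the right intuition, and it is the same skeleton the paper uses, but the specific crossing gadget you propose cannot work. You want to add roughly $n$ degree-$2$ crossing vertices in $B$, each with one foot in $A_1$ and one in $A_2$. The fatal problem already appears at $|X|=2$, before you ever reach the $|X_1|\le 1$ case you worry about: for a cross pair $\{a,a'\}$ with $a\in A_1$, $a'\in A_2$, the only candidates for $N^2(\{a,a'\})$ are crossing vertices (the recursive $B$-vertices attached to $A_i$ see only $A_i$), and a degree-$2$ crossing vertex lies in $N^2(\{a,a'\})$ only if its two neighbours are precisely $a$ and $a'$. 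So each such vertex certifies exactly one cross pair, while there are $|A_1|\cdot|A_2|\approx n^2/4$ cross pairs and each needs two witnesses. No arrangement of $O(n)$ degree-$2$ crossing vertices can give every cross pair even a single common neighbour; the ``few rounds of matchings'' you gesture at would need $\Theta(n)$ rounds, i.e., $\Theta(n^2)$ crossing edges per level, and the recursion would blow up.

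The paper keeps your recursion but replaces the degree-$2$ gadget by high-degree vertices. It takes a (complete) binary tree with leaf set $A$, lets $B$ be the set of internal nodes together with one extra global vertex $y$, and joins each leaf to all of its ancestors and to $y$. Viewed recursively, the crossing gadget at each split is a \emph{single} $B$-vertex adjacent to the entire current part of $A$; together with the global $y$ this already gives every pair two common neighbours, and the inductive verification of $|N^2(X)|\ge|X|$ for larger $X$ goes through by contracting a deepest pair in $X$ to their lowest common ancestor. The edge recurrence is then exactly $E(n)\le E(\lceil n/2\rceil)+E(\lfloor n/2\rfloor)+n$ for the tree part, plus the $n$ edges from $y$ added once at the end, yielding $n\log_2 n+n$. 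So the missing idea is not in the framework but in the gadget: one full-degree $B$-vertex per level, not a scatter of degree-$2$ vertices.
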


\begin{proof}
	We define a class of dHp graphs using binary trees. Let $T$ be an arbitrary binary tree with vertex set $V(T) = A \cup B$, where $A$ is the set of leaves of $T$. Define a bipartite graph $G$ with sides $A$ and $B \cup \{y\}$, where every leaf vertex $a \in A$ is adjacent to its ancestors in $T$ and to vertex~$y$.
	
	We prove that bipartite graphs defined this way are dHp graphs by induction on the number of vertices in $A$. 
	If $|A|=2$, then the double Hall property is satisfied thanks to a common ancestor of the two leaves and vertex~$y$.
	Assume now that $|A| \geq 3$, and let $X \subseteq A$ be any subset of size~$\geq 2$. 
	Choose $u,v \in X$ and $b \in B$ in such a way that $b$ is an ancestor of both $u$ and $v$, but $b$ has no other descendants in $X$. Let $T'$ be a new binary tree obtained by deleting all the descendants of $b$ from $T$ (not just the leaves). The vertex $b$ becomes a leaf in $T'$, so $T'$ has one leaf less than $T$. 
	Let $G'$ be the bipartite graph corresponding to $T'$. Using the inductive hypothesis on $T'$, we have $|N^2_G(X)| \geq |N^2_{G'}(X \cup \{b\} \setminus \{u,v\})| + 1 \geq |X|$.
	
	If $T$ is a complete binary tree with $n$ leaves, then the corresponding bipartite graph $G$ satisfies $e(G) \le n\log_2 n + n$.
\end{proof}

\begin{remark}
	A dHp graph $G(A,B)$ with the minimum number of edges satisfies $|d(u)-d(v)|\leq 2$ for all $u,v\in A$.
\end{remark}

\begin{proof}
	Suppose for contradiction, that there exists $u,v \in A$ with $d(u) > d(v) + 2$. We construct a dHp graph $G'$ with fewer edges and sides $A$ and $B \cup \{b\}$, where $b$ is a new vertex. Let $N_{G'}(v) = N_G(v) \cup \{b\}$, $N_{G'}(u) = N_G(v) \cup \{b\}$ and for all $w \in V(G) \setminus \{u,v\}$ let $N_{G'}(w) = N_G(w)$. We have $e(G') = e(G) + 2 - d(u) + d(v) < e(G)$, which contradicts the minimality of $G$. It remains to show that $G'$ is a dHp graph.
	
	Let $X \subseteq A$ and $|X| \geq 2$. If $u \not\in X$, then $|N^2_{G'}(X)| = |N^2_G(X)| \geq |X|$. If $u \in X$ but $v \not \in X$, then $|N^2_{G'}(X)| = |N^2_G(X \cup \{v\} \setminus \{u\})| \geq |X \cup \{v\} \setminus \{u\}| = |X|$. If $u,v \in X$ and $|X|\geq3$, then $|N^2_{G'}(X)| = |N^2_G(X \setminus \{u\})| + 1 \geq |X|$. Finally, if $X = \{u,v\}$, then $|N^2_{G'}(X)| \geq |N_{G}(v)|+1 \geq 2 = |X|$.
\end{proof}

\section{Concluding remarks and open problems}

We propose some further variants and strengthenings of Problem \ref{main}.
For simplicity, we only state them for paths instead of cycles, as we find them more elegant.
The questions below are non-trivial even if $G$ is a complete bipartite graph.

\begin{problem}
	Let $G$ be a bipartite graph with sides $A$ and $B$, and each edge colored red or blue. 
	For a set $X \subseteq A$ let $N^{RB}(X)$ denote the set of vertices, which are joined to $X$ by a red and by a blue edge as well. 
	Suppose $|N^{RB}(X)|\geq |X|-1$ holds for every  $X \subseteq A$.
	There exists a path with edges in alternating colors covering $A$.
\end{problem}

\begin{problem}
	Let $G$ be a bipartite digraph with sides $A$ and $B$. 
	For a set $X \subseteq A$ let $N^{\leftrightarrow}(X)$ denote the set of vertices, which have an edge to and from $X$. 
	If $|N^{\leftrightarrow}(X)|\geq |X|-1$ holds for every  $X \subseteq A$, then there exists a directed path that covers $A$.
\end{problem}

For all these problems, it is natural to consider geometric variants as well.
For example, what happens in Problem \ref{main} if the vertices of $A$ correspond to a planar point set, the vertices of $B$ to halfplanes, and edges denote containments?

\begin{problem} Given $n$ points and $n$ halfplanes in the plane such that for every $k\ge 2$ for any $k$ points there are at least $k$ halfplanes that contain at least two of them, can we order the points and halfplanes $p_1,H_1,p_2,H_2,\ldots,p_n,H_n$ such that $p_i,p_{i+1}\in H_i$ for $i \in \{1,2,\ldots, n-1\}$ and $p_n, p_1\in H_n$?
\end{problem}

Another possible question is the following.

\begin{problem}
	Consider $n$ points and $n$ (directed) lines in the plane such that for any $k$ points there are at least $k-1$ lines, which intersect the convex hull of the $k$ points, and ask if we can order the points and lines $p_1,l_1,p_2,l_2,\ldots,p_n,l_n$ such that $p_i$ and $p_{i+1}$ are separated by $l_i$?
	Additionally, can we even demand that $p_i$ should be on the left, and $p_{i+1}$ on the right side of the directed line $l_i$?
\end{problem}

%the vertices of $A$ correspond to a planar point set, and the vertices of $B$ to (directed) lines, and the color of each edge is determined by to which side of the lines each point can be found?\\

\medskip
\textbf{Update.}
Four days after we uploaded our preprint to arXiv, another paper \cite{LV} was uploaded that also studies Problem \ref{main}. The obtained results are quite different, though they also mention a connection to the Gallai--Milgram theorem.

\medskip
\textbf{Acknowledgement.}
The presented work was initiated at the 13th Emléktábla Workshop.
We are grateful to the organizers for the inspiring atmosphere, %szerintem ezt akkor is írjuk ide, ha a főszervező a szerzők között van, mert mások is részt vettek a szervezésben
to Nika Salia for proposing the problem and to Panna Gehér and Pál Bärnkopf for their useful comments.
We would also like to thank Kristóf Bérczi for telling us about Theorem \ref{lovasz}.

%\bibliographystyle{plain}
%\bibliography{biblio}

%\iffalse

%\fi

%\begin{thebibliography}{00}

%% \bibitem{label}
%% Text of bibliographic item

%\bibitem{}

%\end{thebibliography}
\end{document}